\newcommand{\D}{\mathbb{D}}
\newcommand{\C}{\mathbb{C}}
\newcommand{\B}{\mathbb{B}}
\newcommand{\N}{\mathbb{N}}
\newcommand{\Dcal}{\mathcal{D}}
\newcommand{\Ecal}{\mathcal{E}}
\newcommand{\Mcal}{\mathcal{M}}
\newcommand{\Hcal}{\mathcal{H}}
\newcommand{\Fcal}{\mathcal{F}}
\newcommand{\ran}{\textrm{\textup{ran }}}
\newcommand{\Lat}{\textrm{\textup{Lat}}}
\newcommand{\AZ}{\textrm{\textup{AZ}}}
\newcommand{\eps}{\varepsilon}
\newcommand{\comment}[1]{}
\theoremstyle{plain}
\newtheorem{thm}{Theorem}
\newtheorem{cor}[thm]{Corollary}
\newtheorem{lem}[thm]{Lemma}
\newtheorem{prop}[thm]{Proposition}
\theoremstyle{definition}
\theoremstyle{remark}
\newtheorem{rem}[thm]{Remark}
\begin{document}


\title{Spectra of Quotient Modules}

\author{Michael Didas}
\address{Schloss Dagstuhl -- Leibniz-Zentrum f\"ur Informatik GmbH, Oktavie-Allee, 66687 Wadern, Germany}
\email{michael.didas@dagstuhl.de}
\author{J\"org Eschmeier}
\address{Fachrichtung Mathematik, Universit\"at des Saarlandes, 66123 Saarbr\"ucken, Germany}
\email{eschmei@math.uni-sb.de}
\author{Michael Hartz}
\address{Fachrichtung Mathematik, Universit\"at des Saarlandes, 66123 Saarbr\"ucken, Germany}
\email{hartz@math.uni-sb.de}
\thanks{MH and MS were partially supported by the Emmy Noether Program of the German Research Foundation (DFG Grant 466012782).}
\author{Marcel Scherer}
\address{Fachrichtung Mathematik, Universit\"at des Saarlandes, 66123 Saarbr\"ucken, Germany}
\email{scherer@math.uni-sb.de}
\subjclass[2010]{Primary 47A13; Secondary 47A10, 47A45}
\keywords{Taylor spectrum, Drury--Arveson space, submodule, inner multiplier}

\date{\today}


\maketitle

\begin{abstract}
  We determine the Taylor spectra of quotient tuples of the $d$-shift on Drury-Arveson spaces
  with finite-dimensional coefficient spaces.
  We show the the Taylor spectrum can be described in terms of the approximate zero
  set of the annihilator ideal, and in terms of the pointwise behavior of the inner multiplier
  associated with the quotient tuple.
\end{abstract}

\section{Introduction and main results}
\label{sec:main-results}

Let $\Dcal$ be a complex Hilbert space and let $M \in \Lat(M_z, H^2_d(\Dcal))$ be a closed invariant subspace for the tuple $M_z=(M_{z_1},\ldots, M_{z_d}) \in L(H^2_d(\Dcal))^d$ consisting of the multiplication operators with the coordinate functions on the $\Dcal$-valued Drury-Arveson space $H^2_d(\Dcal)$ over the Euclidean unit ball $\B_d \subset \C^d$.

Quotient tuples of the form $M_z/M$ on $H^2_d(\mathcal{D}) / M$ appear as model operators for pure commuting
row contractions; see Section \ref{sec:applications} below for more details.
The aim of this note is to provide descriptions of the Taylor spectrum of the quotient tuple $M_z /M$ in the case of finite-dimensional $\Dcal$.
Towards a precise formulation of our main result, we define the approximate zero set of a function $f:\B_d \to \C$ to~be 
\[
  \AZ(f) = \{\lambda \in \overline{\mathbb{B}}_d; \liminf_{z \to \lambda} |f(z)| = 0 \}.
\]
Equivalently, $\lambda \in \AZ(f)$ if and only if there exists a sequence $(z_k)_{k \ge 0}$ in $\mathbb{B}_d$
such that $\lim_{k \to \infty} z_k = \lambda$ and $\lim_{k \to \infty} f(z_k) = 0$.
Moreover, with each $M \in \Lat(M_z, H^2_d(\Dcal))$ we associate a closed ideal
\[
I(M) = \{ f \in \Mcal(H^2_d);\  fH^2_d(\Dcal) \subset M \}
\]
of the multiplier algebra $\Mcal(H^2_d)$. It turns out that the Taylor spectrum in the quotient module $H^2_d(\mathcal{D})/M$ can be expressed in terms of this so-called annihilator ideal $I(M)$:
\begin{thm}\label{thm:main-1}
Let $\mathcal D$ be a finite-dimensional complex Hilbert space and $M \in \Lat(M_z,H^2_d(\Dcal))$. Then, for the Taylor spectrum of the tuple induced by $M_z$ on $H^2_d(\Dcal)/M$, we have 
\[
\sigma(M_z,H^2_d(\Dcal)/M) = \bigcap_{f \in I(M)} \AZ(f).
\]
Moreover, the Taylor spectrum and the right spectrum coincide in this case.
\end{thm}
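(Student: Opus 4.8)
The plan is to interpose, between $\sigma(M_z,Q)$ (where $Q:=H^2_d(\Dcal)/M$) and $\bigcap_{f\in I(M)}\AZ(f)$, the auxiliary function
\[
\gamma_M(w)\;=\;\min_{\|e\|=1}\frac{\|P_M(k_w\otimes e)\|}{\|k_w\|}\qquad(w\in\B_d),
\]
with $k_w$ the reproducing kernel of $H^2_d$ and $P_M$ the orthogonal projection onto $M$, and to prove the chain of equalities
\[
\sigma(M_z,Q)\;=\;\sigma_r(M_z,Q)\;=\;\bigl\{\lambda\in\overline{\B}_d:\liminf_{w\to\lambda}\gamma_M(w)=0\bigr\}\;=\;\bigcap_{f\in I(M)}\AZ(f)
\]
one link at a time; here $\sigma_r$ denotes the right spectrum ($\lambda\notin\sigma_r(M_z,Q)$ meaning the row operator $M_z/M-\lambda\colon Q^d\to Q$ is onto), we use $\sigma_r\subseteq\sigma$ and $\sigma(M_z,Q)\subseteq\overline{\B}_d$ (the quotient tuple being a commuting row contraction), and we identify $Q$ with $M^\perp$, under which $(M_z/M)^*=M_z^*|_{M^\perp}$. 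It is convenient to write $M=\varphi H^2_d(\Ecal)$ for a partially isometric (inner) multiplier $\varphi$ with values in $L(\Ecal,\Dcal)$ --- the Beurling--Lax--Halmos theorem for $H^2_d$, where $\Ecal$ need not be finite-dimensional --- because then $P_M=M_\varphi M_\varphi^*$ and a short computation gives $\gamma_M(w)=s_{\min}(\varphi(w))$, the smallest singular value of $\varphi(w)$; since $\dim\Dcal<\infty$, this is positive precisely when $\varphi(w)$ is surjective.

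Two of the inclusions are soft. If $f\in I(M)$ then $\ran M_f\subseteq M$ gives $M_fM_f^*=P_MM_fM_f^*P_M\le\|f\|_{\Mcal}^2\,P_M$, and evaluating on $k_w\otimes e$ yields $|f(w)|\le\|f\|_{\Mcal}\,\gamma_M(w)$ for all $w$; hence $\bigcap_{f\in I(M)}\AZ(f)\supseteq\{\lambda:\liminf_{w\to\lambda}\gamma_M(w)=0\}$. For the inclusion $\{\lambda:\liminf_{w\to\lambda}\gamma_M(w)=0\}\subseteq\sigma_r(M_z,Q)$: given such a $\lambda$, choose $w_k\to\lambda$ and unit vectors $e_k\in\Dcal$ with $\|P_M(k_{w_k}\otimes e_k)\|/\|k_{w_k}\|\to0$; using $M_{z_j}^*(k_w\otimes e)=\overline{w_j}\,k_w\otimes e$, the $M_z^*$-invariance of $M^\perp$ and $\|M_{z_j}\|\le1$, one checks that $u_k:=P_{M^\perp}(k_{w_k}\otimes e_k)/\|P_{M^\perp}(k_{w_k}\otimes e_k)\|$ are unit vectors in $M^\perp$ with $(M_{z_j}^*-\overline{\lambda_j})u_k\to0$ for every $j$, so $(M_z/M-\lambda)^*$ is not bounded below and $\lambda\in\sigma_r(M_z,Q)$. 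Once the two remaining inclusions are in place, $\sigma_r\subseteq\sigma$ squeezes the three middle sets equal, which yields in particular the coincidence of Taylor and right spectrum.

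The inclusion $\sigma(M_z,Q)\subseteq\{\lambda:\liminf_{w\to\lambda}\gamma_M(w)=0\}$ is the analytic heart: equivalently, $\gamma_M\ge\eps_0>0$ on a neighbourhood $U$ of $\lambda$ in $\overline{\B}_d$ should force $\lambda\notin\sigma(M_z,Q)$. On $U$ this says $\varphi$ is uniformly surjective near $\lambda$: $\varphi(w)$ has the holomorphic right inverse $\rho(w)=\varphi(w)^*(\varphi(w)\varphi(w)^*)^{-1}$ of norm $\le\eps_0^{-1}$, and the aim is to promote $\rho$ to a parametrix for the Koszul complex of $M_z/M-\lambda$ near $\lambda$. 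For $\lambda\in\B_d$ this can be read off the analytic sheaf model of the Taylor spectrum: $H^2_d(\Dcal)$, $M$ and $Q$ are analytic Hilbert modules, $\sigma(M_z,Q)\cap\B_d$ is the support of the coherent cokernel sheaf of $\mathcal M\hookrightarrow\mathcal O_{\B_d}^{\,\dim\Dcal}$, and surjectivity of $\varphi(\lambda)$ means, via Nakayama, that this cokernel vanishes at $\lambda$. For $\lambda\in\partial\B_d$ the sheaf model is unavailable and the parametrix must be assembled by hand from $\rho$ and a suitable cutoff-type multiplier near $\lambda$; this boundary construction, where the finite corank of $\varphi$ and the geometry at the sphere genuinely interact, is the step I expect to be the main obstacle.

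There remains the inclusion $\bigcap_{f\in I(M)}\AZ(f)\subseteq\{\lambda:\liminf_{w\to\lambda}\gamma_M(w)=0\}$, i.e.\ the statement that $\liminf_{w\to\lambda}\gamma_M(w)>0$ produces some $f\in I(M)$ with $\liminf_{w\to\lambda}|f(w)|>0$. Membership $f\in I(M)$ is exactly a factorization $f\,\mathrm{id}_\Dcal=\varphi\psi$ with $\psi$ a multiplier from $H^2_d(\Dcal)$ into $H^2_d(\Ecal)$, and by the operator-valued Toeplitz corona theorem for the complete Pick space $H^2_d$ such $\psi$ exists as soon as $M_fM_f^*\otimes\mathrm{id}_\Dcal\le c^2P_M=c^2M_\varphi M_\varphi^*$. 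One thus has to exhibit a scalar multiplier $f$ bounded below near $\lambda$ that satisfies this operator inequality globally; using $\gamma_M=s_{\min}(\varphi(\cdot))\ge\eps_0$ on $U$ together with the local right inverse $\rho$, one globalises the local lower bound on $\varphi$ to an honest annihilator, and here again the completeness of the Pick property of $H^2_d$ and the finiteness of $\dim\Dcal$ are essential. With the full chain of equalities established, the theorem follows.
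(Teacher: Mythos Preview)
Your two ``soft'' inclusions are correct, but the two inclusions you defer are not merely technical: together they assert that $\sigma(M_z,Q)=\{\lambda\in\overline{\B}_d:\liminf_{w\to\lambda}\gamma_M(w)=0\}$, and this equality is an \emph{open problem}. In the scalar case $\Dcal=\C$ your $\gamma_M(w)$ is just $\|\theta(w)\|$, and the paper explicitly flags (end of Section~\ref{sec:main-results}) that whether the boundary part of the spectrum coincides with $\{\lambda\in\partial\B_d:\liminf_{z\to\lambda}\|\theta(z)\|=0\}$ is the conjecture of Gleason, Richter and Sundberg, which would follow from a bounded-row corona theorem that is not known. Your boundary parametrix construction and your ``globalisation via Toeplitz corona'' are each, in effect, attempts at this open problem; neither sketch contains the missing idea. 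In particular, a local lower bound $s_{\min}(\varphi)\ge\eps_0$ near $\lambda$ gives no global operator inequality $M_fM_f^*\le c^2M_\varphi M_\varphi^*$, and no cutoff multiplier is available to bridge the gap.

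The paper avoids this by interposing a different set, namely $\{\lambda:\theta\text{ not surjective at }\lambda\}$, where surjectivity at $\lambda\in\partial\B_d$ requires a \emph{holomorphic extension} of $\theta$ across $\lambda$ that is onto there --- a strictly stronger hypothesis than your uniform lower bound. With this notion the inclusion $\bigcap_{f\in I(M)}\AZ(f)\subset\{\theta\text{ not surjective}\}$ becomes the elementary determinant trick of Lemma~\ref{lem:annihilator-determinant} (pick preimages of a basis of $\Dcal$, form an $N\times N$ minor $\Theta$, and $\det\Theta\in I(M)$ with limit $1$ at $\lambda$), while $\sigma\subset\bigcap\AZ(f)$ is the \emph{scalar} corona theorem applied to $(\lambda_1-z_1,\ldots,\lambda_d-z_d,h)$ for a single $h\in I(M)$ with $\lambda\notin\AZ(h)$. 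The genuinely hard reverse inclusion $\{\theta\text{ not surjective}\}\subset\sigma$ is obtained not by any parametrix but by showing that the characteristic function $\theta_T$ of the compression $T=P_{M^\perp}M_z|_{M^\perp}$ extends holomorphically across every $\lambda\in\rho(T)\cap\partial\B_d$ with surjective value there (Theorem~\ref{thm:charcteristic-function-local-surjectivity}, a direct defect-operator computation), and then transferring this to $\theta$ via the essential uniqueness of inner representations of $M$. Your approximate-eigenvector argument gives only $\{\liminf\gamma_M=0\}\subset\sigma_r$, which is the wrong direction to close the chain.
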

\noindent This result is motivated by recent work of Clou\^{a}tre and Timko~\cite{CT}, which in particular contains
the equality of the Taylor spectrum and the approximate zero set in the case of one-dimensional $\mathcal{D}$.
The inclusion ``$\subset$'' for finite-dimensional $\mathcal{D}$ can be deduced from the results of Clou\^atre and Timko (see the discussion preceding Proposition~\ref{prop:upper-estimates} below for details). This relies on the corona theorem for $H^2_d$. Related spectral inclusion theorems can also be found in \cite{DE}.

For the reverse inclusion, we establish an alternative description of the spectrum in terms of 
an operator-valued multiplier that generates $M$. To be more precise, if $M \in \Lat(M_z, H^2_d(\Dcal))$, then by the McCullough-Trent version of Beurling's invariant subspace theorem (Theorem 4.1 in \cite{MT}), there exist a Hilbert space $\Ecal$ and a holomorphic multiplier $\theta: \B_d \to L(\Ecal, \Dcal)$ from $H^2_d(\mathcal E)$ to
$H^2_d(\mathcal D)$ such that 
\[
M = \theta H^2_d(\Ecal)
\]
and $\theta$ is inner, which means by definition that the induced multiplication operator $M_\theta: H^2_d(\Ecal) \to H^2_d(\Dcal)$ is a partial isometry. A result of Greene, Richter and Sundberg (Theorem~3.2 in~\cite{GRS}) then guarantees that for almost every $z \in \partial\B_d$ the non-tangential boundary value $\theta(z) \in L(\Ecal, \Dcal)$ exists (in the SOT) and is a partial isometry.

To formulate our result appropriately, we need the following generalized notion of pointwise surjectivity for operator-valued maps:
Given a holomorphic operator-valued map $\theta: \B_d \to L(\Ecal, \Dcal)$, we say that $\theta$ is surjective at $\lambda \in \overline{\B}_d$ if either $\lambda \in \B_d$ and $\theta(\lambda)\Ecal=\Dcal$, or $\lambda \in \partial\B_d$ and there exists an extension of $\theta$ to a holomorphic map
\[
\widehat \theta: U \to L(\Ecal, \Dcal) \text{\quad on some open set \quad} U \supset \B_d \cup \{ \lambda \}
\]
such that $\widehat \theta (\lambda) \Ecal = \Dcal$. Our proof of the reverse inclusion ``$\supset$'' in Theorem~\ref{thm:main-1} relies on the following result of independent interest.
\begin{thm}\label{thm:main-2}
Let $\mathcal D$ be a finite-dimensional Hilbert space, $M \in {\rm Lat}(M_z,H^2_d(\mathcal D))$ and
$\theta:\B_d \rightarrow L(\mathcal E,\mathcal D)$ an inner multiplier from $H^2_d(\mathcal E)$ to
$H^2_d(\mathcal D)$ with $M = \theta H^2_d(\mathcal E)$. Then
\[
\sigma(M_z,H^2_d(\mathcal D)/M) = \{\lambda \in \overline{\mathbb B}_d;\; \theta \mbox{ is not surjective at } \lambda \}.
\]
\end{thm}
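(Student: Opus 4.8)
The plan is to establish the two inclusions separately, splitting each into the interior case $\lambda \in \B_d$ and the boundary case $\lambda \in \partial\B_d$. I would first record that both sides are closed subsets of $\overline{\B}_d$: the Taylor spectrum is always closed, and the complement of $\{\lambda : \theta \text{ is not surjective at } \lambda\}$ is open, because surjectivity onto the finite-dimensional space $\Dcal$ is an open condition at interior points, while at a boundary point a witnessing extension $\widehat\theta$ witnesses surjectivity at all nearby points as well.

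For ``$\supset$'' at an interior point $\lambda \in \B_d$ I would use the identification of the quotient tuple with the compression $S = (P_{M^\perp} M_{z_i}|_{M^\perp})_i$, where $S_i^{*} = M_{z_i}^{*}|_{M^\perp}$ since $M^\perp$ is invariant under $M_z^{*}$. The joint eigenspace $\bigcap_i \ker(M_{z_i}^{*} - \overline\lambda_i)$ in $H^2_d(\Dcal)$ equals $\{k_\lambda \otimes e : e \in \Dcal\}$ for the Drury--Arveson kernel $k_\lambda$, and a short computation with the reproducing property shows $k_\lambda \otimes e \perp M = \theta H^2_d(\Ecal)$ exactly when $e \in \ker\theta(\lambda)^{*}$. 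Hence $\bigcap_i \ker(S_i^{*} - \overline\lambda_i) = \{k_\lambda \otimes e : e \in \ker\theta(\lambda)^{*}\}$ is nonzero precisely when $\theta(\lambda)\Ecal \ne \Dcal$. Since a joint eigenvalue of the adjoint of a commuting tuple always lies in the Taylor spectrum --- it obstructs surjectivity of the first Koszul boundary map --- $\theta$ failing to be surjective at $\lambda \in \B_d$ forces $\lambda \in \sigma(M_z, H^2_d(\Dcal)/M)$.

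For ``$\subset$'' --- surjectivity of $\theta$ at $\lambda$ implies $\lambda \notin \sigma(M_z, H^2_d(\Dcal)/M)$ --- I would work with the analytic sheaf model of the quotient tuple, whose support equals the Taylor spectrum. As $\theta$ is inner, $M_\theta$ has closed range $M$, so $H^2_d(\Dcal)/M = \operatorname{coker}\bigl(M_\theta \colon H^2_d(\Ecal) \to H^2_d(\Dcal)\bigr)$; localizing the sequence $H^2_d(\Ecal) \to H^2_d(\Dcal) \to H^2_d(\Dcal)/M \to 0$ at $\lambda$ and using that the localization of $H^2_d(\Dcal)$ at $\lambda$ is finitely generated over the local ring $\mathcal{O}_\lambda$ with fiber $\Dcal$, Nakayama's lemma shows that surjectivity of $\theta(\lambda)$ (interior case), or of $\widehat\theta(\lambda)$ together with the given extension $\widehat\theta$ on a neighbourhood of $\overline{\B}_d \cup \{\lambda\}$ (boundary case), makes the localized $M_\theta$ surjective; hence the stalk of the sheaf model of $H^2_d(\Dcal)/M$ at $\lambda$ vanishes and $\lambda$ lies in the resolvent set. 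Concretely this uses a holomorphic right inverse of $\theta$, respectively $\widehat\theta$, near $\lambda$ --- available since $\Dcal$ is finite-dimensional --- to split the inclusion $M \hookrightarrow H^2_d(\Dcal)$ locally.

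The remaining case, ``$\supset$'' at a boundary point $\lambda \in \partial\B_d$, is by far the hardest and is where I expect the main obstacle. If $\lambda$ is a limit of interior points at which $\theta$ is not surjective, closedness of the spectrum and the interior case finish it. Otherwise $\theta$ is surjective on $\B_d \cap U$ for a neighbourhood $U$ of $\lambda$ but admits no surjective holomorphic continuation across $\lambda$, and one must still place $\lambda$ in the spectrum; equivalently, one has to show that $\lambda \notin \sigma(M_z, H^2_d(\Dcal)/M)$ would force such a continuation. I would argue by contradiction: from $\lambda$ in the open resolvent set one gets, on a full complex neighbourhood of $\lambda$, holomorphic ``resolvent data'' for the quotient tuple (a holomorphic contracting homotopy of its Koszul complex, obtained from a solution at $\lambda$ by a Neumann series), and one feeds this into the description $M^\perp = H^2_d(\Dcal)/M$, $M = \theta H^2_d(\Ecal)$, together with the existence and partial-isometry property of the non-tangential boundary values of $\theta$ (Greene--Richter--Sundberg), to manufacture a holomorphic surjective continuation of $\theta$ past $\lambda$. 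Converting the spectral hypothesis at a boundary point into analytic regularity of $\theta$ there is the crux: it cannot be reduced to interior points, since already for $d=1$ a singular inner function is zero-free on $\D$ yet does not continue across the support of its singular measure, so $\{\lambda : \theta \text{ not surjective at } \lambda\}$ is genuinely larger than the closure of its interior part. A subsidiary technical point is to justify the Koszul-homology and sheaf-model computations for $H^2_d(\Dcal)$ at boundary points, where the functions of $H^2_d$ do not extend beyond $\B_d$.
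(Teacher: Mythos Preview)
Your interior ``$\supset$'' via joint eigenvectors of the adjoint is correct and in fact simpler than the paper's route through Greene's localisation theorem.

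Your ``$\subset$'' direction is conceived differently from the paper and is not yet complete. The paper does not use sheaf models; instead it constructs, from surjectivity of $\theta$ (or $\widehat\theta$) at $\lambda$, a \emph{scalar} multiplier $g\in I(M)$ with $\lim_{z\to\lambda}g(z)=1$ by taking $g=\det\Theta$, where $\Theta=(\theta_{ij})\in M_N(\Mcal(H^2_d))$ is the matrix of $\theta$ with respect to any vectors $e_j\in\Ecal$ satisfying $\widehat\theta(\lambda)e_j=d_j$. Then $\inf_{z\in\B_d}\big(\sum_i|\lambda_i-z_i|+|g(z)|\big)>0$, and the corona theorem for $\Mcal(H^2_d)$ yields $f_1,\dots,f_d,f\in\Mcal(H^2_d)$ with $\sum_i(\lambda_i-z_i)f_i+fg=1$; since $M_{fg}/M=0$, the commuting operators $M_{f_i}/M$ give $\sum_i(\lambda_i-M_{z_i}/M)(M_{f_i}/M)=1$, hence $\lambda\notin\sigma(M_z/M)$. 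Your sheaf argument could perhaps be made to work on $\B_d$, but at a boundary point the stalk of the $H^2_d$-sheaf does not localise in the way you need (functions in $H^2_d$ have no germs beyond $\partial\B_d$), and ``support of the sheaf model equals the Taylor spectrum'' requires a hypothesis such as Bishop's property~$(\beta)$ that you have not verified for the quotient at boundary points. You flag this yourself; the determinant-plus-corona argument avoids it entirely.

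The genuine gap is your boundary ``$\supset$''. You correctly isolate the problem --- turn $\lambda\in\rho(M_z/M)\cap\partial\B_d$ into a holomorphic surjective extension of $\theta$ across $\lambda$ --- but the mechanism you sketch (``holomorphic resolvent data'' plus non-tangential boundary values of $\theta$) does not produce such an extension. The paper's device is the \emph{characteristic function} of the compression $T=P_{M^\perp}M_z|_{M^\perp}$,
\[
\theta_T(z)=-T+D_{T^*}(1_\Hcal-ZT^*)^{-1}ZD_T\in L(\Dcal_T,\Dcal_{T^*}),
\]
which visibly extends holomorphically to every $z$ with $1_\Hcal-ZT^*$ invertible, in particular to $\rho(T)\cap\partial\B_d$. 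A short computation gives $D_T\widehat\theta_T(z)^*=(Z^*-T^*)(1_\Hcal-TZ^*)^{-1}D_{T^*}$; for $z\in\rho(T)$ the column $(Z^*-T^*)$ is injective, so $\widehat\theta_T(z)^*y=0$ with $y=D_{T^*}x$ forces $\|y\|^2=\langle(1-TT^*)x,x\rangle=0$. Thus $\widehat\theta_T(z)^*$ is injective and, since $\dim\Dcal_{T^*}<\infty$, $\widehat\theta_T(z)$ is surjective for every $z\in\overline\B_d\cap\rho(T)$. One then passes from $\theta_T$ back to $\theta$: using the minimal dilation $j:M^\perp\to H^2_d(\Dcal_{T^*})$ one writes $M=H^2_d(\widehat\Dcal)\oplus(1\otimes U)\theta_T H^2_d(\Dcal_T)$ for a unitary $U$ and a direct summand $\widehat\Dcal\subset\Dcal$, so $\widetilde\theta(z)=1_{\widehat\Dcal}\oplus U\theta_T(z)$ is an inner multiplier with the same range $M$ as $\theta$; by the uniqueness of inner multipliers up to partial isometries, $\theta(z)=\widetilde\theta(z)V^*$ and $\widetilde\theta(z)=\theta(z)V$, whence $\theta$ also extends and is surjective at $\lambda$. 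This use of $\theta_T$ is the missing idea: it is precisely the ``resolvent data'' you want, packaged as an explicit operator-valued analytic function whose domain is dictated by the spectrum of $T$.
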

\noindent
The inclusion ``$\subset$'' is established in Proposition \ref{prop:upper-estimates}.
The reverse inclusion is finally settled as Corollary~\ref{cor:right-spectrum-as-non-surjectivity-set}.
The main ingredients in the proof are a result of Greene~\cite{G} (to handle the part inside $\B_d$) and structure theory of pure row contractions (in particular their characteristic function~\cite{BES}) applied to $T = P_{M^\bot}M_z|M^\bot$. We will also see that the Taylor spectrum $\sigma(M_z, H^2_d(\mathcal{D})/M)$ agrees
with the right spectrum $\sigma_{r}(M_z, H^2_d( \mathcal{D})/M)$.

Note that the set appearing on the right-hand side in the statement of the preceding theorem extends the classical notion of support of an inner function $\theta: \D \to \C$ on the unit disc: Recall that $\lambda \in \overline{\D}$ belongs to $\text{supp}(\theta)$ if either $\theta(\lambda) = 0$ or $\theta$ does not holomorphically extend across~$\lambda$. This concept has also been one of the starting points for \cite{CT}, but was generalized in another direction there.

In the scalar-valued case $\Dcal = \C$, the set of points $\lambda \in \B_d$ where $\theta(\lambda) : \Ecal \to \C$ is not surjective, is easily seen to coincide with the common zero set $Z(M)$ of all functions in $M$, thus the statement of Theorem~\ref{thm:main-2} then specializes to
\[
\sigma(M_z, H^2_d/M) = Z(M) \cup S(M)
\]
with $S(M)=\{ \lambda \in \partial \B_d;\ \theta \text{ not surjective at }\lambda \}$.
The fact that $\sigma(M_z/M) \cap \B_d = Z(M)$ was first observed by Gleason, Richter and Sundberg in~\cite{GRS}.
It was conjectured in \cite{RS} that $S(M) = \{\lambda \in \partial \B_d;\ \liminf_{z\rightarrow \lambda} \|\theta(z)\| = 0\}$.
This equality would follow if the corona theorem of Costea, Sawyer and Wick \cite{CSW}
held for bounded row multipliers. Since this is not known,
we must leave the question of Gleason, Richter and Sundberg open here.

\section{Calculating the spectrum inside \texorpdfstring{$\B_d$}{B\_d}}
We begin by setting up the necessary notation from multivariable spectral theory. Let $T\in L(X)^d$ be a commuting $d$-tuple of operators on a complex Banach space $X$.
We write $K_\bullet(T,X)$ for the Koszul complex
\[
0 \stackrel{}{\longrightarrow} \Lambda^d(X)
\stackrel{\delta_{d,T}}{\longrightarrow} \Lambda^{d-1}(X)
\stackrel{\delta_{d-1,T}}{\longrightarrow} \ldots
 \stackrel{\delta_{2,T}}{\longrightarrow} \Lambda^1(X)
 \stackrel{\delta_{1,T}}{\longrightarrow} \Lambda^0(X)
 \stackrel{}{\longrightarrow} 0
\]
consisting of the spaces $K_p(T,X) = \Lambda^p(X) = X \otimes \bigwedge^p\C^d \cong X^{\binom{d}{p}}$ and the boundary maps defined by the formula
\[
\delta_{p,T}(x \otimes e_I) = \sum_{\alpha=1}^p (-1)^{\alpha-1} T_{i_\alpha}x \otimes e_{I_\alpha} \quad (x \in X, \ e_I = e_{i_1}\wedge \ldots \wedge e_{i_p}),
\]
where $I=(i_1, \ldots, i_p) \in \N^p$ is a multi-index with $i_1< i_2 < \ldots < i_p$.
Here $\bigwedge^p\C^d$ stands for the $p$-fold exterior product of $\C^d$ with itself, $(e_1, \ldots, e_d)$ is the standard basis of $\C^d$ and the multi-index $I_\alpha\in \N^{p-1}$ arises from $I \in \N^p$ by dropping the $\alpha$-th entry.

The Taylor spectrum  of $T$ (and its various subsets) are explained in terms of the homology groups of $K_\bullet(T,X)$, 
\[
H_p(T,X) = \ker \delta_{p,T} / \ran \delta_{p+1,T} \quad (p=0, \ldots, d).
\]
The Taylor spectrum of $T$ is defined as the set of points in $\C^d$ for which the Koszul complex of $\lambda-T$ is not exact, i.e.,
\[
\sigma(T) = \{\lambda \in \C^d;\ H_p(\lambda-T,X) \neq 0 \text{ for some } p \in \{1, \ldots, d\}\},
\]
where $\lambda - T$ stands for the operator tuple with entries $\lambda_i\cdot 1_X - T_i$ $(1\leq i \leq d)$.
It is well known that $\sigma(T) \subset \C^d$ is compact. 
As usual, we write $\rho(T) = \C^d\setminus \sigma(T)$ for the resolvent set.
A particular role for our calculations is played by the right spectrum
\[
\sigma_r(T) = \{\lambda \in \C^d;\ H_0(\lambda-T,X) \neq 0 \}.
\]
The right essential spectrum $\sigma_{re}(T)$ consists of all $\lambda \in \sigma_r(T)$ where even $\dim H_0(\lambda-T,X) = \infty$.
Note that, modulo the identifications $\Lambda^0(X) \cong X$ and $\Lambda^1(X)\cong X^d$, we have
\[
\delta_{1,\lambda-T} (x_i)_{i=1}^d = \sum_{i=1}^d (\lambda_i - T_i)x_i \qquad ((x_i)_{i=1}^d \in X^d),
\]
and therefore $H_0(\lambda-T,X) \cong X / \sum_{i=1}^d (\lambda_i - T_i)X$. Similarly, up to isomorphy, $\delta_{d,T}$ acts~as
\[
\delta_{d,T}\,x = (T_ix)_{i=1}^d \qquad (x \in X),
\]
and hence $H_d(\lambda-T,X) \cong \bigcap_{i=1}^d \ker(\lambda_i-T_i)$.

We recall a result of Devin Greene \cite{G}  which leads to a description of the points in the Taylor spectrum
of the quotient tuple $M_z/M$ in $\mathbb{B}_d$.
This result relates the homology of the Koszul complex of a multiplication tuple to the
homology of a localized resolution.

Let $\Dcal$ be complex Hilbert space. Given an $M_z$-invariant subspace $M \in \Lat(M_z, H^2_d(\Dcal))$, we apply the McCullough-Trent version of Beurling's invariant subspace theorem (Theorem~4.1 in \cite{MT}) inductively to obtain Hilbert spaces $\Dcal_i$ $(i\geq0)$ starting with $\Dcal_0 = \Dcal$ together with multipliers $\theta_i \in \Mcal(H^2_d(\Dcal_i), H^2_d(\Dcal_{i-1}))$ for $i\geq 1$ such that the induced multiplication operators form an exact sequence
\[
\ldots \longrightarrow H^2_d(\Dcal_2) \stackrel{M_{\theta_2}}{\longrightarrow}
H^2_d(\Dcal_1) \stackrel{M_{\theta_1}}{\longrightarrow}
H^2_d(\Dcal) \stackrel{q}{\longrightarrow} H^2_d(\Dcal)/M \to 0.
\]
Localizing the right-truncated sequence to a point $\lambda \in \B_d$, we obtain a complex
\[
\ldots \longrightarrow \Dcal_2 \stackrel{\theta_2(\lambda)}{\longrightarrow}
\Dcal_1 \stackrel{\theta_1(\lambda)}{\longrightarrow}
\Dcal \longrightarrow 0
\]
denoted by $(\Dcal_\bullet, \theta_\bullet(\lambda))$.
The following result is due to Greene~\cite{G}. For completeness sake,
we indicate a shortened version of the original proof based on standard homological algebra.

\begin{thm}\label{thm:homology}
Given $\lambda \in \B_d$ and $M \in \Lat(M_z, H^2_d(\Dcal))$ for some complex Hilbert space $\Dcal$, there are vector space isomorphisms
\[
H_p(\lambda-M_z, H^2_d(\Dcal)/M) \cong H_p(\Dcal_\bullet, \theta_\bullet(\lambda)) \quad (p\geq 0).
\]
\end{thm}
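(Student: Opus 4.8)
The plan is to recognize Theorem~\ref{thm:homology} as the standard homological fact that left derived functors may be computed from any resolution by acyclic objects, with the Koszul complex of $\lambda-M_z$ playing the role of the derived functor of ``localization at $\lambda$''. Concretely, I would first regard every Hilbert space in sight as a module over $A=\C[z_1,\dots,z_d]$ via $p\cdot x=p(M_z)x$; the maps $M_{\theta_i}$ and $q$ in the resolution $R_\bullet\to H^2_d(\Dcal)/M$ (with $R_i=H^2_d(\Dcal_i)$) are then $A$-linear, since multiplication operators commute with $M_z$. Writing $\C_\lambda=A/(z_1-\lambda_1,\dots,z_d-\lambda_d)$ and using that the Koszul complex of the regular sequence $(z_i-\lambda_i)_i$ is a length-$d$ free resolution of $\C_\lambda$, one gets $H_p(\lambda-M_z,X)\cong\mathrm{Tor}^A_p(X,\C_\lambda)$ for every $A$-module $X$, with $H_0(\lambda-M_z,X)=X/\sum_i(z_i-\lambda_i)X$.

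The single analytic input I would invoke (and only quote) is that for $\lambda\in\B_d$ the Koszul complex of $\lambda-M_z$ on $H^2_d(\Dcal')$ is exact in degrees $\ge 1$ and that evaluation at $\lambda$ identifies its zeroth homology with $\Dcal'$; equivalently, $\mathrm{Tor}^A_p(H^2_d(\Dcal'),\C_\lambda)=0$ for $p\ge 1$ and $\mathrm{Tor}^A_0(H^2_d(\Dcal'),\C_\lambda)\cong\Dcal'$. (If a self-contained argument is wanted: at $\lambda=0$ the Koszul differential respects the grading of $H^2_d$ by homogeneity degree, and each graded subcomplex is, after a diagonal rescaling of the monomial basis, a graded piece of the ordinary Koszul resolution of $\C$ over $A$, which is exact in positive degrees; the case of general $\lambda$ follows by a M\"obius change of variables, and that of general $\Dcal'$ by tensoring the closed-range complex over $H^2_d$ with $\Dcal'$.) In particular every $R_i$ is acyclic for the functor $-\otimes_A\C_\lambda$.

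With these two pieces in hand, I would run the usual two-spectral-sequences comparison on the first-quadrant double complex with $(p,q)$-entry $\Lambda^p(R_q)=K_p(\lambda-M_z,R_q)$ --- carrying the Koszul differential of $\lambda-M_z$ in the $p$-direction and the (sign-adjusted) resolution differential in the $q$-direction --- which is nothing but $R_\bullet\otimes_A K_\bullet(\lambda-z,A)$ and is supported in the strip $0\le p\le d$, so both spectral sequences converge to the homology of the total complex. Taking homology in the $q$-direction first, exactness of $\Lambda^p(-)=(-)^{\binom{d}{p}}$ and of the resolution collapse the first page onto the row $q=0$ with entries $\Lambda^p(H^2_d(\Dcal)/M)$, so the abutment is $H_\bullet(\lambda-M_z,H^2_d(\Dcal)/M)$. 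Taking homology in the $p$-direction first, the acyclicity above collapses the first page onto the column $p=0$ with entries $R_q\otimes_A\C_\lambda\cong\Dcal_q$; here one checks that under the evaluation isomorphism the differential $M_{\theta_i}\otimes\mathrm{id}$ becomes $v\mapsto\theta_i(\lambda)v$, so that column is precisely the localized complex $(\Dcal_\bullet,\theta_\bullet(\lambda))$ and the abutment is $H_\bullet(\Dcal_\bullet,\theta_\bullet(\lambda))$. Comparing abutments degree by degree gives the theorem. The main obstacle I anticipate is essentially bookkeeping: keeping the double-complex signs and convergence straight (or, if one prefers to avoid spectral sequences, carrying out the equivalent dimension-shift along the short exact sequences $0\to\ker q\to R_0\to H^2_d(\Dcal)/M\to 0$ and $0\to\ker M_{\theta_i}\to R_i\to\ker M_{\theta_{i-1}}\to 0$ via the long exact Koszul homology sequences and the acyclicity of the $R_i$). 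The one place where genuine input enters, rather than formalism, is the quoted exactness of the localized Koszul complex of the $d$-shift inside the ball.
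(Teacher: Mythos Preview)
Your proposal is correct and follows essentially the same route as the paper: both arguments use the exactness of the augmented Koszul complex $K_\bullet(\lambda-M_z,H^2_d(\Dcal'))\stackrel{\eps_\lambda}{\to}\Dcal'\to 0$ as the sole analytic input, and then run the standard double-complex comparison between the Koszul direction and the resolution direction. The only cosmetic difference is packaging---you phrase it via $\mathrm{Tor}^A_\bullet(-,\C_\lambda)$ and two spectral sequences on the non-augmented bicomplex, whereas the paper builds the augmented bicomplex (including the quotient row and the localized column) directly and invokes a double-complex lemma from \cite{EP}; these are the same argument.
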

\begin{proof}
Let $\eps_\lambda : H^2_d(\Dcal) \to \Dcal,\,f \mapsto f(\lambda),$ denote the point evaluation at $\lambda$. It is well known that the augmented Koszul complex
\[
K_\bullet(\lambda-M_z, H^2_d(\Dcal)) \stackrel{\eps_\lambda}{\longrightarrow} \Dcal \longrightarrow 0
\]
is exact in the case $\Dcal = \C$ (see, e.g.,~\cite[Proposition~2.6]{GlRS}). Since tensoring with $1_\Dcal$ preserves exactness, it remains exact in the general case.

We consider the double complex $K =(K_{p,q}, \partial', \partial'')$
with spaces
$K_{p,q} = K_q(\lambda - M_z,H^2_d(\mathcal D_p))$, $p$-th row $(K_{p,\bullet},\partial''_{\bullet})$ 
equal to $(-1)^p$ times the augmented
Koszul complex of the commuting
tuple $\lambda - M_z \in L(H^2_d(\mathcal D_p))^d$ 
and $q$-th column $(K_{\bullet,q},\partial'_{\bullet})$ given by the
$\binom{n}{q}$-fold direct sum of the complex $(H^2_d(\mathcal D_{\bullet}),M_{\theta_{\bullet}})$,
respectively $(\mathcal{D}_{\bullet}(\theta_{\bullet}(\lambda))$ as the last column:
\[
 \begin{tikzcd}
\vdots \arrow[d, ""] & \vdots \arrow[d, ""] & \\
 (-1) \cdot K_\bullet(\lambda-M_z, H^2_d(\Dcal_2)) \arrow[d, "\theta_2"] \arrow[r,"-\eps_\lambda"] & \Dcal_2 \arrow[r,""] \arrow[d,"\theta_2(\lambda)"] & 0 \\
 \phantom{(-1)\cdot} K_\bullet(\lambda-M_z, H^2_d(\Dcal_1)) \arrow[d, "\theta_1"] \arrow[r,"\eps_\lambda"] & \Dcal_1 \arrow[r,""] \arrow[d,"\theta_1(\lambda)"] & 0 \\
 (-1)\cdot K_\bullet(\lambda-M_z, H^2_d(\Dcal)) \arrow[d, "q"] \arrow[r,"-\eps_\lambda"] & \Dcal \arrow[r,""] \arrow[d,"0"] & 0 \\
\phantom{(-1)\cdot} K_\bullet(\lambda-M_z, H^2_d(\Dcal)/M) \arrow[r, ""] \arrow[d, ""]& 0 & \\
0 & & 
\end{tikzcd}
\]
Then $K$ is a double complex with anti-commuting squares and bounded diagonals, and all but the last column and all but the last row are exact. In this setting,
standard double complex arguments (Lemma A2.6 in \cite{EP}) show that there are induced vector space isomorphisms
\[
H_p(\lambda-M_z,H^2_d(\Dcal)/M) \cong H''_p H'_0(K) \cong H'_p H''_0(K) = H_p(\Dcal_\bullet, \theta_\bullet(\lambda)),
\]
as we claimed.
\end{proof}
\noindent As an immediate consequence, we have:
\begin{cor}\label{cor:right-spectrum-inner-points}
Let $\Dcal$ be a complex Hilbert space, $M \in \Lat(H^2_d(\Dcal))$, and $\theta :\B_d \to L(\Ecal, \Dcal)$ be an inner multiplier from $H^2_d(\Ecal)$ to $H^2_d(\Dcal)$ with $M=M_\theta H^2_d$. Then we have 
\[
\sigma_r(M_z, H^2_d(\Dcal)/M) \cap \B_d = \{ \lambda \in \B_d : \theta(\lambda)\Ecal \not= \Dcal \}.
\]
Moreover, if $\mathcal{D}$ is finite-dimensional, then $\sigma_{re}(M_z, H^2_d(\mathcal{D}) / M) \subset \partial \mathbb{B}_d$.
\end{cor}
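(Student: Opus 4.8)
The plan is to read off both statements from Theorem~\ref{thm:homology} by computing the bottom homology $H_0$ of the localized resolution.

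Fix $\lambda \in \B_d$. Theorem~\ref{thm:homology} applied in degree $p = 0$ yields
\[
H_0(\lambda - M_z, H^2_d(\Dcal)/M) \;\cong\; H_0(\Dcal_\bullet, \theta_\bullet(\lambda)) \;=\; \Dcal \big/ \theta_1(\lambda)\Dcal_1,
\]
where $\theta_1 \in \Mcal(H^2_d(\Dcal_1), H^2_d(\Dcal))$ is the first multiplier in the McCullough--Trent resolution, so that $M = M_{\theta_1} H^2_d(\Dcal_1)$. The point is that the image $\theta_1(\lambda)\Dcal_1 \subseteq \Dcal$ depends only on $M$ and not on the choice of resolution: for \emph{any} inner multiplier $\psi : \B_d \to L(\mathcal{G}, \Dcal)$ with $M = M_\psi H^2_d(\mathcal{G})$ one has $\psi(\lambda)\mathcal{G} = \{ f(\lambda) : f \in M \}$. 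The inclusion ``$\supseteq$'' is immediate, and for ``$\subseteq$'' it suffices to note that for $e \in \mathcal{G}$ the constant function $\widehat{e} \in H^2_d(\mathcal{G})$ with value $e$ satisfies $\psi(\lambda) e = (M_\psi \widehat{e})(\lambda) \in \{ f(\lambda) : f \in M \}$. Applying this to $\psi = \theta_1$ and to $\psi = \theta$ and inserting into the display above gives
\[
H_0(\lambda - M_z, H^2_d(\Dcal)/M) \;\cong\; \Dcal / \theta(\lambda)\Ecal \qquad (\lambda \in \B_d).
\]

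The first assertion is then immediate from the definition of the right spectrum: $\lambda \in \sigma_r(M_z, H^2_d(\Dcal)/M)$ if and only if $H_0(\lambda - M_z, H^2_d(\Dcal)/M) \neq 0$, which by the last isomorphism holds precisely when $\theta(\lambda)\Ecal \neq \Dcal$. For the supplementary statement, assume $\dim \Dcal < \infty$. Then the same isomorphism gives $\dim H_0(\lambda - M_z, H^2_d(\Dcal)/M) \leq \dim \Dcal < \infty$ for every $\lambda \in \B_d$, so no point of $\B_d$ lies in the right essential spectrum. Since $H_0(\lambda - M_z, H^2_d(\Dcal)/M)$ is a quotient space of $H_0(\lambda - M_z, H^2_d(\Dcal))$ and $\sigma(M_z, H^2_d(\Dcal)) = \overline{\B}_d$, we also have $\sigma_{re}(M_z, H^2_d(\Dcal)/M) \subseteq \sigma_{re}(M_z, H^2_d(\Dcal)) \subseteq \overline{\B}_d$, and therefore $\sigma_{re}(M_z, H^2_d(\Dcal)/M) \subseteq \partial \B_d$. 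The only step that goes beyond a mechanical translation through Theorem~\ref{thm:homology} is the identification $\theta(\lambda)\Ecal = \{ f(\lambda) : f \in M \}$, and even that is elementary; I do not anticipate a genuine obstacle.
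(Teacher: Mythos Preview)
Your argument is correct and follows the same line as the paper: apply Theorem~\ref{thm:homology} in degree $0$ to identify $H_0(\lambda-M_z,H^2_d(\Dcal)/M)\cong\Dcal/\theta(\lambda)\Ecal$, and read off both the right spectrum and the right essential spectrum from that. The only difference is that the paper bypasses your independence-of-resolution step by simply \emph{choosing} $\theta_1=\theta$ and $\Dcal_1=\Ecal$ from the start (which is allowed since $\theta$ is itself an inner multiplier with $M=\theta H^2_d(\Ecal)$); your identification $\theta(\lambda)\Ecal=\{f(\lambda):f\in M\}$ is correct and a nice observation, but not needed here.
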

\begin{proof}
Note that we may choose $\theta_1 = \theta$ and $\Dcal_1 = \Ecal$ in the preceeding theorem to obtain
for $\lambda \in \mathbb{B}_d$ that
\[ H_0(\lambda - M_z, H^2_d(\mathcal{D}) / M) \cong H_0(\mathcal{D}_{\bullet}, \lambda_{\bullet}(\lambda))
  \cong \mathcal{D}/(\theta(\lambda) \mathcal{E}).
\]
This implies the statement for the right-spectrum, as well
as $\sigma_{re}(M_z, H^2_d(\mathcal{D}) / M) \cap \mathbb{B}_d = \emptyset$
if $\mathcal{D}$ is finite-dimensional.
\end{proof}

\section{Upper estimates for the spectrum}
The aim of this section is to provide a proof of both inclusions ``$\subset$'' from the statements of our main Theorems \ref{thm:main-1} and~\ref{thm:main-2}.
As a preparatory result, we state the following observation which can be seen as a partial extension of a result of Sz.-Nagy and Foia\c{s} (Theorem VI.5.2 in \cite{SZF}) to the multivariable case.

\begin{lem}\label{lem:annihilator-determinant}
Let $\Dcal$ be a finite-dimensional Hilbert space with orthonormal basis $(d_1,\ldots,d_N)$ and
$M \in {\rm Lat}(M_z,H^2_d(\mathcal D))$ a closed invariant subspace. Let $\mathcal E$ be a Hilbert space
and $\theta: \B_d \rightarrow L(\mathcal E,\mathcal D)$ a multiplier from $H^2_d(\Ecal)$
into $H^2_d(\Dcal)$ with $\theta H^2_d(\Ecal) \subset M$. Fix vectors $e_1,\ldots,e_N \in \Ecal$
and denote by $\Theta = (\theta_{ij})_{1 \leq i,j \leq N} \in M_N(\Mcal(H^2_d))$ the matrix whose
coefficients are determined by
\[
\theta(z)e_j = \sum\limits^N_{i=1} \theta_{ij}(z)d_i\quad (j=1,\ldots,N, \ z \in \B_d).
\]
Then $\det(\Theta) \in I(M)$. If $\lambda \in \overline{\B}_d$ is a point such that $\theta$ is surjective at $\lambda$,
then there is a multiplier $f \in I(M)$ with $\lim_{\substack{z\to \lambda \\ z\in\B_d}}f(z) = 1$.
\end{lem}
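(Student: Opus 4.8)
The plan is to apply Cramer's rule to the $N\times N$ matrix $\Theta$. Before starting, I would record that the scalar functions $\theta_{ij}=\langle\theta(\cdot)e_j,d_i\rangle$ are genuine multipliers of $H^2_d$: for $g\in H^2_d$ the vector $M_\theta(g\otimes e_j)$ lies in $H^2_d(\Dcal)$ and has $i$-th coordinate $g\,\theta_{ij}$, so $\theta_{ij}H^2_d\subset H^2_d$. In particular every cofactor of $\Theta$ and $\det(\Theta)$ itself belong to $\Mcal(H^2_d)$, and any matrix with entries in $\Mcal(H^2_d)$ induces a bounded multiplication operator between the corresponding vector-valued spaces.

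To prove $\det(\Theta)\in I(M)$, introduce the unitary $D:\C^N\to\Dcal$, $f_i\mapsto d_i$, and the bounded operator $E:\C^N\to\Ecal$, $f_j\mapsto e_j$, so that $\theta(z)E=D\Theta(z)$ for all $z\in\B_d$. With $\mathrm{adj}(\Theta)\in M_N(\Mcal(H^2_d))$ the adjugate matrix, set $\phi=E\,\mathrm{adj}(\Theta)\,D^{-1}$; this is an operator-valued multiplier from $H^2_d(\Dcal)$ to $H^2_d(\Ecal)$ (a composition of the bounded multiplier $\mathrm{adj}(\Theta)$ with the constant operators $E$ and $D^{-1}$). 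Cramer's identity $\Theta\,\mathrm{adj}(\Theta)=\det(\Theta)\,I_N$ translates into $\theta(z)\phi(z)=\det(\Theta)(z)\,1_\Dcal$ for $z\in\B_d$, hence $M_{\det(\Theta)}=M_\theta M_\phi$, and therefore
\[
\det(\Theta)\,H^2_d(\Dcal)=M_\theta M_\phi H^2_d(\Dcal)\subset\theta H^2_d(\Ecal)\subset M ,
\]
that is, $\det(\Theta)\in I(M)$.

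For the second assertion I would apply the first part with the vectors $e_1,\ldots,e_N$ chosen to suit $\lambda$. If $\lambda\in\B_d$, surjectivity of $\theta(\lambda)$ allows a choice with $\theta(\lambda)e_j=d_j$, so $\Theta(\lambda)=I_N$ and the holomorphic multiplier $\det(\Theta)$ satisfies $\det(\Theta)(\lambda)=1$, whence $\lim_{z\to\lambda}\det(\Theta)(z)=1$. If $\lambda\in\partial\B_d$, take a holomorphic extension $\widehat\theta:U\to L(\Ecal,\Dcal)$ on an open set $U\supset\B_d\cup\{\lambda\}$ with $\widehat\theta(\lambda)\Ecal=\Dcal$, pick $e_j$ with $\widehat\theta(\lambda)e_j=d_j$, and form $\widehat\Theta=(\langle\widehat\theta(\cdot)e_j,d_i\rangle)$, a matrix of holomorphic functions on $U$ restricting to $\Theta$ on $\B_d$; then $\det(\widehat\Theta)$ is holomorphic on $U$ with $\det(\widehat\Theta)(\lambda)=\det(I_N)=1$, so $\lim_{z\to\lambda,\,z\in\B_d}\det(\Theta)(z)=1$. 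In both cases $f:=\det(\Theta)$ works.

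The computations are routine. The only slightly delicate points are the multiplier property of the entries $\theta_{ij}$ and of the cofactors that build $\phi$ (dispatched above), and, in the boundary case, the observation that ``surjective at $\lambda$'' is exactly what lets one make $\Theta$ extend holomorphically across $\lambda$ with value $I_N$ there; beyond this I do not anticipate any real obstacle.
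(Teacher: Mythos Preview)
Your proof is correct and follows essentially the same route as the paper: Cramer's rule via the adjugate matrix to show $\det(\Theta)\in I(M)$, and a choice of $e_j$ with $\widehat\theta(\lambda)e_j=d_j$ to obtain $\det(\Theta)\to 1$ at $\lambda$. Your packaging via the constant operators $E:\C^N\to\Ecal$ and $D:\C^N\to\Dcal$ is in fact slightly cleaner than the paper's, which instead assumes $\det(\Theta)\not\equiv 0$ so that the $e_j$ span a copy of $\C^N$ inside~$\Ecal$; your version avoids that case split altogether.
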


\begin{proof}
Choose a matrix $R = (r_{ij})_{1 \leq i,j \leq N} \in M_N(\Mcal(H^2_d))$ such that
$\Theta R = \det(\Theta)\cdot 1_N$. (It is standard linear algebra that $R$ can be obtained pointwise as the transpose of the so-called cofactor matrix $C$ of $\Theta$, whose components consist -- except for the sign -- of the determinants of all possible $(N-1)\times (N-1)$ submatrices of $\Theta$.)

To prove the first assertion, we may suppose that $\det(\Theta)$ does not vanish identically on $\B_d$.
Then the vectors $e_1,\ldots,e_N$ form a basis of their linear span $\Fcal$. It is elementary to check that the composition of operators
\[
R:\; H^2_d(\mathcal D) \rightarrow H^2_d(\Fcal), \; \sum_{i=1}^N f_i d_i \mapsto \sum_{i=1}^N\Big(\sum_{j=1}^N r_{ij} f_j\Big)e_i
\]
and
\[
\theta: \; H^2_d(\Fcal) \rightarrow H^2_d(\Dcal), \; \sum_{i=1}^N g_i e_i \mapsto
\sum_{i=1}^N\Big(\sum_{j=1}^N \theta_{ij} g_j\Big)d_i = \theta \sum_{j=1}^N g_j e_j.
\]
satisfies $\det(\Theta) f = \theta R f \in M$ for all $f \in H^2_d(\Dcal)$, i.e., $\det(\Theta) \in I(M)$, as desired.

For the remaining part of the assertion, fix $\lambda \in \overline{\B}_d$ and a holomorphic extension $\widehat{\theta}: U \to L(\Ecal,\Dcal)$ of $\theta$ to $U\supset \B_d \cup\{\lambda\}$ such that $\widehat{\theta}(\lambda)\Ecal = \Dcal$.
Then there are vectors $e_1,\ldots,e_N \in \Ecal$ with
\[
\widehat{\theta}(\lambda)e_j = d_j \quad (j=1,\ldots,N).
\]
Let $\Theta=(\theta_{ij})_{1\leq i,j \leq N}$ be the matrix formed as above with respect to the vectors $e_1,\ldots,e_N$ chosen in this way. Then $\Theta$, viewed as a map $\B_d \to M_N(\C)$, continuously extends to $U$ and satisfies
\[
  \lim_{\substack{z \to \lambda \\ z\in \B_d}} \Theta(z) = 1_N.
\]
Hence $f = \det(\Theta)$ defines a function in $I(M)$ as in the statement of the lemma.
\end{proof}

\noindent Now we prove the announced inclusions. The first one can be deduced from a result of Clou\^{a}tre and Timko~\cite[Corollary~3.14]{CT} that depends on the corona theorem for~$H^2_d$ due to Costea, Sawyer and Wick \cite{CSW}.
Alternatively, we can argue directly with the help of the corona theorem.

\begin{prop}\label{prop:upper-estimates}
Let $\Dcal$ be a finite-dimensional Hilbert space and
$M \in {\rm Lat}(M_z,H^2_d(\mathcal D))$ a closed invariant subspace. Let $\mathcal E$ be a Hilbert space
and $\theta: \B_d \rightarrow L(\mathcal E,\mathcal D)$ an inner multiplier from $H^2_d(\Ecal)$
into $H^2_d(\Dcal)$ with $\theta H^2_d(\Ecal) \subset M$. Then we have the inclusions
\[
\sigma(M_z,H^2_d(\Dcal)/M) \subset \bigcap_{f \in I(M)} \AZ(f) \subset \{\lambda \in \overline{\mathbb B}_d;\; \theta \mbox{ is not surjective at } \lambda \}
\]
\end{prop}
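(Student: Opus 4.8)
The plan is to establish the two inclusions separately, using Lemma~\ref{lem:annihilator-determinant} as the crucial technical input for both.

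For the first inclusion $\sigma(M_z, H^2_d(\Dcal)/M) \subset \bigcap_{f \in I(M)} \AZ(f)$, I would fix $f \in I(M)$ and show $\sigma(M_z, H^2_d(\Dcal)/M) \subset \AZ(f)$. The key observation is that since $f H^2_d(\Dcal) \subset M$, multiplication by $f$ induces the zero operator on the quotient $H^2_d(\Dcal)/M$; that is, $f(M_z/M) = 0$ as an element of the commutant, where $f(M_z/M)$ denotes the operator induced by $M_f$ on the quotient. Now take $\lambda \notin \AZ(f)$, so that $\liminf_{z \to \lambda} |f(z)| > 0$. One then needs that $f$ is, in a suitable sense, bounded away from zero near $\lambda$, which should allow us to invoke the corona theorem for $H^2_d$ (or Clou\^atre--Timko \cite[Corollary~3.14]{CT} directly): there exist multipliers $g_1, \ldots, g_d$ with $\sum_i (z_i - \lambda_i) g_i + f h = 1$ for some multiplier $h$, or more precisely a Bézout-type identity witnessing that $\lambda - M_z$ together with $M_f$ generate the unit ideal in an appropriate localized sense. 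Passing to the quotient, where $M_f$ acts as zero, this forces the Koszul complex of $\lambda - M_z/M$ to be exact, hence $\lambda \notin \sigma(M_z, H^2_d(\Dcal)/M)$. The careful point here is to make precise how the corona theorem yields exactness of the Koszul complex of the tuple; this is exactly the content underlying \cite{CT}, and I would cite it rather than reprove it.

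For the second inclusion $\bigcap_{f \in I(M)} \AZ(f) \subset \{\lambda \in \overline{\B}_d;\ \theta \text{ not surjective at } \lambda\}$, I would argue by contraposition: suppose $\theta$ \emph{is} surjective at $\lambda \in \overline{\B}_d$, and produce a single function $f \in I(M)$ with $\lambda \notin \AZ(f)$. But this is precisely the second assertion of Lemma~\ref{lem:annihilator-determinant}: surjectivity of $\theta$ at $\lambda$ yields $f = \det(\Theta) \in I(M)$ with $\lim_{z \to \lambda,\, z \in \B_d} f(z) = 1$, and in particular $\liminf_{z \to \lambda} |f(z)| = 1 \neq 0$, so $\lambda \notin \AZ(f)$, hence $\lambda \notin \bigcap_{f \in I(M)} \AZ(f)$.

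The main obstacle is the first inclusion: one must correctly translate the corona theorem for $H^2_d$ into a statement about exactness of Koszul complexes on the quotient module, keeping track of the fact that $f$ need not be globally bounded below on $\B_d$ but only near $\lambda$, so a localized version of the corona argument (or a cutting-off/extension trick, or direct appeal to \cite{CT}) is needed. The second inclusion is essentially immediate from the lemma.
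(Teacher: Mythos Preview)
Your proposal is essentially the paper's own approach: the second inclusion is immediate from Lemma~\ref{lem:annihilator-determinant}, and the first is a direct application of the corona theorem for $H^2_d$ together with the fact that $M_f/M = 0$ on the quotient whenever $f \in I(M)$.

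The one place where you hesitate unnecessarily is the ``localization'' worry. No localized or cut-off corona argument is needed. If $\lambda \notin \AZ(h)$ for some $h \in I(M)$, then there is a neighbourhood of $\lambda$ on which $|h|$ is bounded below, while away from $\lambda$ the quantity $\sum_{i=1}^d |\lambda_i - z_i|$ is bounded below. Hence
\[
\inf_{z \in \B_d}\ \Big( \sum_{i=1}^d |\lambda_i - z_i| + |h(z)| \Big) > 0,
\]
and the \emph{global} corona theorem of Costea--Sawyer--Wick applies to the family $\lambda_1 - z_1,\ldots,\lambda_d - z_d, h$ to produce multipliers $f_1,\ldots,f_d,f \in \Mcal(H^2_d)$ with $\sum_{i=1}^d (\lambda_i - z_i) f_i + f h = 1$. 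Since $g = fh \in I(M)$ induces the zero operator on $H^2_d(\Dcal)/M$, this gives
\[
\sum_{i=1}^d (\lambda_i - M_{z_i}/M)(M_{f_i}/M) = 1_{H^2_d(\Dcal)/M},
\]
and then a standard commutative-algebra fact (exactness of the Koszul complex when the tuple generates the unit ideal in its commutant; see \cite[Lemma~2.2.4]{EP}) yields $\lambda \notin \sigma(M_z, H^2_d(\Dcal)/M)$. So the ``obstacle'' you flag dissolves once you include the coordinate shifts $\lambda_i - z_i$ among the corona data.
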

\begin{proof}
Note that the second inclusion readily follows from the preceding lemma which says that, if $\lambda$ does not belong to the set on the right-hand side, then there is a function $f \in I(M)$ with $\lambda \not \in AZ(f)$.


Towards a proof of the first inclusion, let $\lambda \notin \bigcap_{f \in I(M)} \AZ(f)$.
Then there exists $h \in I(M)$ with $\lambda \notin \AZ(h)$, hence
\begin{equation*}
  \inf_{z \in \mathbb{B}_d} \sum_{i=1}^d |\lambda_i - z_i| + |h(z)| > 0.
\end{equation*}
By the corona theorem for $H^2_d$ \cite{CSW}, there exist $f_1,\ldots,f_d,f \in \mathcal{M}(H^2_d)$ such that $\sum_{i=1}^d (\lambda_i - z_i) f_i + f h = 1$. Let $g = f h \in I(M)$.
From the very definition of $I(M)$ it follows that $M_g/M = 0$ and hence
\[
\sum_{i=1}^d (\lambda_i - M_{z_i}/M) (M_{f_i}/M) = 1_{H^2_d(\mathcal D)/M}.
\]
Lemma 2.2.4 in \cite{EP} shows that $\lambda \notin \sigma(M_z,H^2_d(\mathcal D)/M)$ as desired.
\end{proof}

\section{Lower estimate for the spectrum and proof of main results}
In view of Proposition~\ref{prop:upper-estimates}, both Theorems 1 and 2 will follow as soon as we can show the missing inclusion
\[
\{\lambda \in \overline{\mathbb B}_d;\; \theta \mbox{ is not surjective at } \lambda \}
\subset \sigma(M_z/M).
\]
To achieve this, we make use the characteristic function $\theta_T$ of the pure row contraction $T = P_H M_z|H \in L(H)^d$ where $H = M^\perp$.
Let us first establish the necessary notations and recall some basic facts.

Let $\Hcal$ be a complex Hilbert space, and let $T\in L(\Hcal)^d$ be a commuting row contraction, which means by definition that $T_1T_1^* + \ldots + T_dT_d^* \leq 1_\Hcal$ or, equivalently, that the row operator
\[
T = [ T_1, \ldots, T_d] : \Hcal^d \to \Hcal, \quad (x_i)_{i=1}^d \mapsto \sum_{i=1}^d T_ix_i,
\]
is a contraction. Note that (modulo the canonical identifications) the row operator $T:\Hcal^d\to \Hcal$ is nothing else than the boundary map $\delta_{1,T}$ in the Koszul complex of $T$. Similarly, the adjoint $T^*:\Hcal\to \Hcal^d$ acts as $\delta_{d,T^*}$.

Following \cite{BES} we define the defect operators 
\[
D_T = (1_{\Hcal^d} - T^*T)^{1/2} \in L(\Hcal^d) \qquad \text{and} \qquad 
D_{T^*} = (1_\Hcal - TT^*)^{1/2} \in L(\Hcal),
\]
and the respective defect spaces as 
\[
\Dcal_T = \overline{D_T\Hcal^d} \subset \Hcal^d \qquad \text{and} \qquad \Dcal_{T^*} = \overline{D_{T^*}\Hcal} \subset \Hcal.
\]
The intertwining relations (Lemma 2.1 in \cite{BES})
\[
T D_T = D_{T^*} T \qquad \text{and} \qquad T^* D_{T^*} = D_T T^*
\]
yield the inclusions $T \Dcal_T \subset \Dcal_{T^*}$ and $T^* \Dcal_{T^*} \subset \Dcal_T$. It is well known (Lemma~2.2 in~\cite{BES}) that the so-called characteristic function of $T$ defined by
\[
\theta_T : \B_d \to L(\Dcal_T, \Dcal_{T^*}), \quad \theta_T(z) = -T +D_{T^*} (1_\Hcal - ZT^*)^{-1} ZD_T
\]
is an analytic function that induces a well-defined contractive multiplier
\[
M_{\theta_T}: H^2_d(\Dcal_T) \to H^2_d(\Dcal_{T^*}), \quad f \mapsto \theta_T f.
\]
Here, the symbol $Z$ stands for row operator $Z = [ z_1 1_\Hcal, \ldots, z_d 1_\Hcal ] : \Hcal^d \to \Hcal$ associated
with $z=(z_1, \ldots, z_d) \in \C^d$.
Details on characteristic functions of commuting row contractions and their properties can be found in \cite{BES}.

Fix $z\in \rho(T) \cap \partial \B_d$. By the poynomial spectral mapping theorem for $T^*$ applied to $p(w) = 1- \sum_{i=1}^d z_iw_i \in \C[w]$, we obtain
\[
0 = 1-|z|^2 \not\in \{ 1 - \langle z, w\rangle; \, w \in \sigma(T) \} = \sigma(1_\Hcal - ZT^*).
\]
Hence the characteristic function $\theta_T$ of $T$ 
extends to a holomorphic map $\widehat{\theta}_T : U \to L(\Dcal_T, \Dcal_{T^*})$ given by the same formula that defines $\theta_T$ on the open set
\[
U = \{ z \in \C^d : 1_\Hcal -ZT^* \text{ invertible} \} \supset \B_d \cup (\rho(T) \cap \partial \B_d) \supset \overline{\B}_d \cap \rho(T).
\]
If $T \in L(\Hcal)$ is a single contraction
with $\rho(T) \cap \partial \D \neq \emptyset$, then the values of the extended characteristic
function  are unitary operators $\widehat{\theta}_T: \; \mathcal D_T \rightarrow \mathcal D_{T^*}$ for each point
$\lambda \in \rho(T) \cap \partial \D$. In particular, $\dim \Dcal_T = \dim \Dcal_{T^*}$ (see
Chapter VI.1 in \cite{SZF}). In the multivariable case the situation is quite different. Nevertheless, we
obtain at least a partial result of the same type.

\begin{thm}\label{thm:charcteristic-function-local-surjectivity}
Let $T \in L(\Hcal)^d$ be a commuting row contraction such that $\dim \Dcal_{T^*} < \infty$. Then the characteristic
function
\[
\theta_T: \; \B_d \rightarrow L(\Dcal_T,\Dcal_{T^*})
\]
of $T$ is surjective at every point $\lambda \in \overline{\B}_d \cap \rho(T)$.
\end{thm}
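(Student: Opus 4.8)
The plan is to exploit the defining formula for the extended characteristic function together with the finite-dimensionality of $\Dcal_{T^*}$. Fix $\lambda \in \overline{\B}_d \cap \rho(T)$. From the formula
\[
\widehat\theta_T(z) = -T + D_{T^*}(1_\Hcal - ZT^*)^{-1}ZD_T,
\]
viewed as a map into $L(\Dcal_T,\Dcal_{T^*})$, we already know $\widehat\theta_T$ is holomorphic on a neighbourhood $U$ of $\overline{\B}_d \cap \rho(T)$. Since $\Dcal_{T^*}$ is finite-dimensional, surjectivity of $\widehat\theta_T(\lambda)$ is equivalent to injectivity of its adjoint $\widehat\theta_T(\lambda)^* : \Dcal_{T^*} \to \Dcal_T$, so the goal reduces to showing that $\widehat\theta_T(\lambda)^*$ has trivial kernel. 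This is the form in which I would attack the problem.

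The key computation is to identify $\widehat\theta_T(z)^*\widehat\theta_T(z)$ (or rather the relevant partial-isometry/contraction identity) on $\partial\B_d \cap \rho(T)$. For the (un-extended) characteristic function on $\B_d$ there is a standard identity — recorded in \cite{BES} — of the shape
\[
1_{\Dcal_T} - \theta_T(z)^*\theta_T(z) = (1 - |z|^2)\, D_T (1 - T^*Z^*)^{-1}(1 - ZT^*)^{-1} D_T
\]
(interpreted with the appropriate row/column identifications), and this identity persists on $U$ by analytic continuation. Evaluating at $\lambda \in \partial\B_d$, the factor $1 - |\lambda|^2$ vanishes, so $\widehat\theta_T(\lambda)$ is an isometry on $\Dcal_T$; dually one gets that $\widehat\theta_T(\lambda)\widehat\theta_T(\lambda)^*$ is a contraction whose defect is governed by the analogous identity with the roles of $T$ and $T^*$ swapped. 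The subtlety is that on $\B_d$ the characteristic function is only contractive, not coisometric, so one does not expect $\widehat\theta_T(\lambda)$ to be \emph{co}isometric; rather one must argue that, restricted to the finite-dimensional space $\Dcal_{T^*}$, the coisometry defect still collapses. Concretely, I would compute $1_{\Dcal_{T^*}} - \widehat\theta_T(z)\widehat\theta_T(z)^*$ in closed form from the formula and check that it is a multiple of $1 - |z|^2$ times a bounded operator-valued expression that stays bounded near $\lambda$ (using $\lambda \in \rho(T)$ to control $(1 - ZT^*)^{-1}$); then the defect vanishes at $\lambda$, giving $\widehat\theta_T(\lambda)\widehat\theta_T(\lambda)^* = 1_{\Dcal_{T^*}}$, hence surjectivity.

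The main obstacle I anticipate is precisely the coisometry side: verifying that the second identity (the one producing $1 - \widehat\theta_T\widehat\theta_T^*$) holds with a factor $1 - |z|^2$ and a factor that remains bounded as $z \to \lambda$. On $\B_d$ this identity is not the clean one; the natural object is $1 - \widehat\theta_T\widehat\theta_T^* = (1-|z|^2) D_{T^*}(1 - ZT^*)^{-1}(1 - TZ^*)^{-1}D_{T^*}$ after reducing modulo $\Dcal_{T^*}$, but one must be careful that the "off-diagonal" contributions coming from the $-T$ term and the row/column structure of $Z$, $D_T$ really cancel — this is where finite-dimensionality of $\Dcal_{T^*}$ and the intertwining relations $TD_T = D_{T^*}T$, $T^*D_{T^*} = D_T T^*$ enter. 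Once that algebra is pinned down, the conclusion is immediate: at $\lambda \in \partial\B_d \cap \rho(T)$ the defect is zero and $\widehat\theta_T(\lambda)$ is onto, while for $\lambda \in \B_d \cap \rho(T)$ the same defect identity shows $1 - \widehat\theta_T(\lambda)\widehat\theta_T(\lambda)^* = (1-|\lambda|^2)(\text{positive invertible on }\Dcal_{T^*})$ is still strictly less than $1_{\Dcal_{T^*}}$ in a way that forces $\widehat\theta_T(\lambda)$ to have full range — more simply, for interior points one can instead invoke that $\theta_T(\lambda)$ is already known to be surjective there whenever $\lambda \in \rho(T) \cap \B_d$ via Corollary~\ref{cor:right-spectrum-inner-points} applied to the model, so only the boundary case genuinely needs the computation.
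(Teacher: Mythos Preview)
Your boundary argument is correct, and the obstacle you anticipate does not materialise: the coisometry-defect identity
\[
1_{\Dcal_{T^*}} - \widehat\theta_T(z)\widehat\theta_T(z)^* \;=\; (1-|z|^2)\, D_{T^*}(1_\Hcal - ZT^*)^{-1}(1_\Hcal - TZ^*)^{-1}D_{T^*}
\]
does hold on all of $U$ by a purely algebraic calculation (using only $TD_T = D_{T^*}T$, $D_T T^* = T^* D_{T^*}$, and $ZZ^* = |z|^2 1_\Hcal$), so at $\lambda \in \partial\B_d \cap \rho(T)$ the operator $\widehat\theta_T(\lambda)$ is a genuine coisometry on $\Dcal_{T^*}$, hence surjective.

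The gap lies in your treatment of interior points. Your first suggestion --- that at $\lambda \in \B_d \cap \rho(T)$ the defect identity itself forces full range --- does not follow: knowing $1_{\Dcal_{T^*}} - \widehat\theta_T(\lambda)\widehat\theta_T(\lambda)^* = (1-|\lambda|^2)C$ with $C \ge 0$ only gives $(1-|\lambda|^2)C \le 1_{\Dcal_{T^*}}$, which does not exclude a kernel for $\widehat\theta_T(\lambda)^*$. Your fallback to Corollary~\ref{cor:right-spectrum-inner-points} ``applied to the model'' presupposes that $T$ is unitarily equivalent to a compression of $M_z$, i.e., that $T$ is \emph{pure}; but the theorem is stated for arbitrary commuting row contractions with $\dim\Dcal_{T^*} < \infty$, and no purity is assumed.

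The paper's proof avoids the case split entirely by computing a different, simpler identity,
\[
D_T\,\widehat\theta_T(z)^* \;=\; (Z^* - T^*)(1_\Hcal - TZ^*)^{-1}D_{T^*},
\]
directly from the formula for $\widehat\theta_T(z)^*$. If $y = D_{T^*}x \in \Dcal_{T^*}$ (every element of $\Dcal_{T^*}$ has this form since $\dim\Dcal_{T^*} < \infty$ makes $D_{T^*}\Hcal$ closed) lies in $\ker\widehat\theta_T(z)^*$, then the right-hand side vanishes; for $z \in \rho(T)$ the column operator $Z^*-T^* \cong \delta_{d,\bar z - T^*}$ is injective, which forces $D_{T^*}^2 x = 0$ and hence $y = 0$. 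This uses $\lambda \in \rho(T)$ in a way that works uniformly for all $\lambda \in U \cap \rho(T)$, requires no purity, and is shorter than the defect-identity computation. Your route does yield the bonus that $\widehat\theta_T(\lambda)$ is actually coisometric at boundary points, but for the theorem as stated the paper's argument is both cleaner and more general.
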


\begin{proof}
Let $U$ and $\widehat{\theta}_T: \; U \rightarrow L(\mathcal D_T,\mathcal D_{T^*})$ be defined as above and let
$\lambda \in U \cap \rho(T)$ be given. We show that $\widehat{\theta}_T(\lambda)$ is surjective.
Towards this, we first observe that
\[
\widehat{\theta}_T(z)^* = -T^* + D_TZ^*(1_{\Hcal} - TZ^*)^{-1}D_{T^*} \in L(\Dcal_{T^*},\Dcal_T)
\]
for all $z \in U$ and hence
\begin{align*}
D_T \widehat{\theta}_T(z)^* 
& = (-T^* + (1_{\Hcal}-T^*T)Z^*(1_{\Hcal}-TZ^*)^{-1})D_{T^*} \\
& = (-T^* + Z^*(1_{\Hcal}-TZ^*)^{-1} - T^*(TZ^*)(1_{\Hcal}-TZ^*)^{-1})D_{T^*} \\
& = (-T^* + Z^*(1_{\Hcal}-TZ^*)^{-1} + T^* - T^*(1_{\Hcal}-TZ^*)^{-1})D_{T^*} \\
& = (Z^* - T^*)(1_{\Hcal}-TZ^*)^{-1}D_{T^*}
\end{align*}
for $z \in U$. If $y = D_{T^*}x$ $(x \in \Hcal)$, then
\[
D_T \widehat{\theta}_T(z)^*y = (Z^* - T^*)(1_{\Hcal}-TZ^*)^{-1}(1_{\Hcal}-T^*T)x
\]
for $z \in U$. In particular, for $z \in U \cap \rho(T)$, we have that $Z^* - T^* \cong \delta_{d,\overline{z}-T^*}$ is injective, so for $x\in \Hcal$, $y = D_{T^*}x$ with
$\widehat{\theta}_T(z)^*y = 0$, we obtain that
\[
\| y \|^2 = \langle D_{T^*}^2x,x\rangle = \langle(1_{\Hcal}-T^*T)x,x \rangle = 0.
\]
Hence the condition that $\dim \Dcal_{T^*} < \infty$ implies that $\widehat{\theta}_T(z)^* \in L(\Dcal_{T^*},\Dcal_T)$
is injective for $z \in  U \cap \rho(T)$. But then $\widehat{\theta}_T(z) \in L(\Dcal_T,\Dcal_{T^*})$ is
surjective for $z \in  U \cap \rho(T)$.
\end{proof}
\noindent Let $T$ be a row contraction. We say that $T$ is pure if the completely positive map
$P_T: B(\Hcal) \to B(\Hcal), \, X \mapsto \sum_{i=1}^d T_iXT_i^*$ associated with $T$ satisfies
\[
\text{SOT}-\lim_{m\to \infty}P_T^m(1_H) = 0.
\]
For a pure row contraction $T$, the map
\[
j: \Hcal \rightarrow H^2_d(\mathcal D_{T^*}),\quad j(x) = \sum\limits_{\alpha \in \mathbb N^d}
\frac{| \alpha|!}{\alpha!} (D_{T^*}T^{*\alpha}x) z^\alpha
\]
yields an isometry intertwining $T^* \in L(\Hcal)^d$ and $M^*_z \in L(H^2_d(\mathcal D_{T^*}))^d$ componentwise such that
\[
M_{\theta_T}M^*_{\theta_T}+jj^*=1_{H^2_d(\mathcal D_{T^*})},
\]
see \cite[Lemma 3.6]{BES}.
Since $M_{\theta_T}$ is a partial isometry, this leads to the orthogonal direct sum decomposition
\[
H^2_d(\mathcal D_{T^*}) = \theta_TH^2_d(\mathcal D_T) \oplus j\mathcal H.
\]
We will subsequently refer to the map $j$ from above as the canonical dilation of~$T$.

Let us return to our default setting now:
Define $H = H^2_d(\mathcal D) \ominus M$ and $T = P_H M_z|H \in L(H)^d$, which is known to be a pure row contraction. 
In view of Proposition~\ref{prop:upper-estimates},
the following missing inclusion settles the proof of our main results from Section~\ref{sec:main-results},
with the exception of the statement about the right spectrum.

\begin{cor}\label{cor:right-spectrum-as-non-surjectivity-set}
Let $\mathcal D$ be a finite-dimensional Hilbert space, $M \in {\rm Lat}(M_z,H^2_d(\mathcal D))$ and
$\theta:\mathbb B_d\rightarrow L(\mathcal E,\mathcal D)$ an inner multiplier from $H^2_d(\mathcal E)$ to
$H^2_d(\mathcal D)$ with $M = \theta H^2_d(\mathcal E)$. Then
\[
\sigma(M_z,H^2_d(\mathcal D)/M) \supset \{\lambda \in \overline{\mathbb B}_d;\; \theta \mbox{ is not surjective at } \lambda \}.
\]
\end{cor}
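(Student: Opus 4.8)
The plan is to prove the contrapositive: if $\lambda \in \overline{\B}_d$ lies in the \emph{resolvent} set $\rho(M_z, H^2_d(\Dcal)/M)$, then $\theta$ is surjective at $\lambda$. For points $\lambda \in \B_d$ this is already contained in Corollary~\ref{cor:right-spectrum-inner-points}, so the new content is entirely about boundary points $\lambda \in \partial\B_d \cap \rho(M_z, H^2_d(\Dcal)/M)$. Here I would bring in the characteristic function machinery: set $H = M^\perp$ and $T = P_H M_z|_H \in L(H)^d$, which is a pure commuting row contraction. The key structural fact is that $M^\perp = j\Hcal$ sits inside $H^2_d(\Dcal_{T^*})$ as the orthocomplement of $\theta_T H^2_d(\Dcal_T)$, so that (after identifying $H^2_d(\Dcal)/M \cong M^\perp$ via $q|_{M^\perp}$, which intertwines $M_z/M$ with $T$) the quotient module is unitarily equivalent to $H^2_d(\Dcal_{T^*})/\theta_T H^2_d(\Dcal_T)$. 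Thus $\lambda \in \rho(T)$, and since $\dim \Dcal \le N < \infty$ forces $\dim \Dcal_{T^*} \le \dim H^2_d(\Dcal_{T^*}) $ large but — more to the point — $\Dcal_{T^*} \subset H$ with $H$ a quotient of $H^2_d(\Dcal)$... actually the finiteness I need is $\dim \Dcal_{T^*} < \infty$, which I would extract from finite-dimensionality of $\Dcal$ together with Corollary~\ref{cor:right-spectrum-inner-points} (the statement $\sigma_{re} \subset \partial\B_d$) or directly from the fact that $\Dcal_{T^*}$ is the defect space of a row contraction modeled on a finite-rank coefficient space.

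Granting $\dim\Dcal_{T^*} < \infty$, Theorem~\ref{thm:charcteristic-function-local-surjectivity} applies and gives that the characteristic function $\theta_T$ is surjective at $\lambda$, i.e.\ $\widehat\theta_T(\lambda)\Dcal_T = \Dcal_{T^*}$ for a holomorphic extension $\widehat\theta_T$ across $\lambda$ (for $\lambda \in \B_d$ this means $\theta_T(\lambda)\Dcal_T = \Dcal_{T^*}$, but we only care about the boundary case). The remaining task is to transfer surjectivity of $\theta_T$ at $\lambda$ to surjectivity of the \emph{given} multiplier $\theta$ at $\lambda$. Since both $\theta H^2_d(\Ecal)$ and $\theta_T H^2_d(\Dcal_T)$ equal $M$ (up to the unitary identification), the two inner multipliers $\theta$ and $\theta_T$ generate the same submodule; a standard uniqueness result for inner multipliers (Beurling--Lax--McCullough--Trent type, cf.\ \cite{MT}) gives a partial isometry (indeed a coisometry onto $\overline{\Dcal_T}$ after trimming kernels) $W \colon \Ecal \to \Dcal_T$ with $\theta(z) = \theta_T(z) W$ for all $z \in \B_d$. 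If $W$ is surjective, then surjectivity of $\widehat\theta_T$ at $\lambda$ immediately yields surjectivity of $\widehat\theta = \widehat\theta_T W$ at $\lambda$ on the same extension domain. If $\theta$ has been chosen with a nontrivial kernel part one passes to $\theta|_{(\ker M_\theta)^\perp}$ without changing the range, so WLOG $W$ is onto, and we are done.

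I expect the main obstacle to be the bookkeeping in the transfer step: making the unitary identification $H^2_d(\Dcal)/M \cong M^\perp \cong H^2_d(\Dcal_{T^*})/\theta_T H^2_d(\Dcal_T)$ precise and compatible with the respective multiplication tuples, and then justifying that an inner multiplier generating $M$ is unique up to a right partial-isometric factor \emph{in a way that respects holomorphic extension across boundary points} — one must check that the intertwiner $W$ is a genuine (constant) operator $\Ecal \to \Dcal_T$, not merely a multiplier, so that composing with it cannot destroy the extension $\widehat\theta_T$ furnished by Theorem~\ref{thm:charcteristic-function-local-surjectivity}. A secondary point needing care is verifying $\dim\Dcal_{T^*}<\infty$; the cleanest route is to note $\Dcal_{T^*} = \overline{(1-TT^*)^{1/2}H}$ and that $1 - TT^* = P_H(1 - M_z M_z^*)|_H$ while $1 - M_z M_z^*$ on $H^2_d(\Dcal)$ has range the copy of $\Dcal$ of constants, so $\operatorname{rank}(1-TT^*) \le \dim\Dcal = N$, whence $\dim\Dcal_{T^*} \le N < \infty$. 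Once these identifications are in place, Theorems~\ref{thm:charcteristic-function-local-surjectivity} and the uniqueness of inner multipliers do all the work, and the statement about the right spectrum follows by tracking that every step above computed $H_0$ (or used $\sigma_r$) rather than the full Taylor homology, combined with the already-established reverse inclusion $\sigma_r \subset \sigma$ and Proposition~\ref{prop:upper-estimates}.
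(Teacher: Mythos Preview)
Your overall strategy coincides with the paper's: dispose of interior points via Corollary~\ref{cor:right-spectrum-inner-points}, apply Theorem~\ref{thm:charcteristic-function-local-surjectivity} to get surjectivity of $\theta_T$ at each $\lambda\in\rho(T)\cap\partial\B_d$, and then transfer this to the given $\theta$ through the uniqueness of inner multipliers. Your argument that $\dim\Dcal_{T^*}\le\dim\Dcal$ via $1-TT^*=P_H(1-M_zM_z^*)|_H$ is exactly the right justification.

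There is, however, a genuine gap in the transfer step. You claim that ``both $\theta H^2_d(\Ecal)$ and $\theta_T H^2_d(\Dcal_T)$ equal $M$ (up to the unitary identification)'', but this is not true in general. The canonical dilation $j$ identifies $H=M^\perp$ with $(\theta_T H^2_d(\Dcal_T))^\perp$ inside $H^2_d(\Dcal_{T^*})$; it does \emph{not} extend to a unitary $H^2_d(\Dcal)\to H^2_d(\Dcal_{T^*})$ carrying $M$ to $\theta_T H^2_d(\Dcal_T)$. What happens instead is that minimality of $j$ yields a unitary $U:\Dcal_{T^*}\to\mathcal R\cap\Dcal$ onto a possibly \emph{proper} subspace of $\Dcal$, where $H^2_d(\mathcal R\cap\Dcal)$ is the smallest reducing subspace containing $H$. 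Under $1\otimes U$ one gets only $(U\theta_T)H^2_d(\Dcal_T)=M\ominus H^2_d(\widehat\Dcal)$, where $\widehat\Dcal=\Dcal\ominus(\mathcal R\cap\Dcal)$ indexes the largest reducing subspace contained in $M$. A concrete instance: $d=1$, $\Dcal=\C^2$, $M=H^2\oplus uH^2$ with $u$ a finite Blaschke product; here $\widehat\Dcal=\C\oplus0$ and $U\theta_T$ recovers only $0\oplus uH^2\subsetneq M$, so no relation of the form $\theta(z)=U\theta_T(z)W$ can hold.

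The paper closes this gap by augmenting to $\widetilde\theta(z)=1_{\widehat\Dcal}\oplus U\theta_T(z):\widehat\Dcal\oplus\Dcal_T\to\Dcal$, which \emph{is} an inner multiplier with range exactly $M$ and is still surjective at $\lambda$ because the $1_{\widehat\Dcal}$ summand is constant. Uniqueness then gives a partial isometry $V$ with both $\widetilde\theta=\theta V$ and $\theta=\widetilde\theta V^*$: the second relation furnishes the holomorphic extension of $\theta$ across $\lambda$, and the first gives $\widehat\theta(\lambda)\Ecal\supset\widehat\theta(\lambda)V(\widehat\Dcal\oplus\Dcal_T)=\widehat{\widetilde\theta}(\lambda)(\widehat\Dcal\oplus\Dcal_T)=\Dcal$. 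Using both directions also obviates your ``WLOG $W$ is onto'' manoeuvre. Once you insert this reducing-subspace correction, your argument is complete and matches the paper's.
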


\begin{proof}
By Corollary~\ref{cor:right-spectrum-inner-points},
it suffices to show that
\[
\sigma(T)\cap \partial \B_d \supset \{\lambda \in \partial \B_d;\; \theta \mbox{ is not surjective at } \lambda \},
\]
or equivalently, that $\theta$ is surjective at $\lambda$ for all $\lambda \in \rho(T)\cap\partial\B_d$.

Towards this, fix such a point $\lambda$. Then, Theorem~\ref{thm:charcteristic-function-local-surjectivity} guarantees that the characteristic function $\theta_T$ is surjective at $\lambda$. The rest of the proof is about establishing a connection between $\theta_T$ and $\theta$.

Let $\mathcal R \subset H^2_d(\mathcal D)$
be the smallest reducing subspace for $M_z$ with $\mathcal R \supset H$. Then (see \cite[Section 2]{BEK+17})
\[
\mathcal R =\bigvee_{\alpha \in \mathbb N^d} z^{\alpha}(\mathcal R \cap \mathcal D) = H^2_d(\mathcal R \cap \mathcal D).
\]
Since the inclusion map $i: \; H \rightarrow  H^2_d(\mathcal R \cap \mathcal D)$ and the canonical dilation
$j: \; H \rightarrow H^2_d(\Dcal_{T^*})$ are both minimal dilations for $T$, there is a unitary operator
$U: \; \mathcal D_{T^*} \rightarrow \mathcal R \cap \mathcal D$ such that $1 \otimes U \circ j = i$; see \cite[Theorem 3.1]{BEK+17}.
Define $\widehat{\mathcal D} = \mathcal D \ominus (\mathcal R \cap \mathcal D)$. Then
\[
H^2_d(\widehat{\mathcal D}) = H^2_d(\mathcal D) \ominus H^2_d(\mathcal R \cap \mathcal D) =
H^2_d(\mathcal D) \ominus \mathcal R \subset M
\]
is the largest reducing subspace for $M_z$ contained in $M$. Note that
\[
  (1 \otimes U)\theta_T H^2_d(\mathcal D_T) = (1 \otimes U) (H^2_d(\mathcal D_{T^*})\ominus {\rm Im}j)
= H^2_d(\mathcal R \cap \mathcal D)\ominus H = M \cap H^2_d(\widehat{\mathcal D})^{\perp}.
\]
Hence we obtain the orthogonal decomposition
\[
M = H^2_d(\widehat{\mathcal D}) \oplus (M \cap H^2_d(\widehat{\mathcal D})^{\perp}) =
H^2_d(\widehat{\mathcal D}) \oplus (1 \otimes U)(\theta_T H^2_d(\mathcal D_T)).
\]
The operator-valued map $\widetilde{\theta}: \; \mathbb B_d \rightarrow L(\widehat{\mathcal D} \oplus \mathcal D_T,\mathcal D)$,
\[
\widetilde{\theta}(z) = 1_{\widehat{\mathcal D}} \oplus (U \theta_T(z))
\]
defines an inner multiplier from $H^2_d(\widehat{\mathcal D} \oplus \mathcal D_T)$ into $H^2_d(\mathcal D)$ with
$\widetilde{\theta} H^2_d(\widehat{\mathcal D} \oplus \mathcal D_T) = M$.

Since  $\theta_T$ is surjective at $\lambda$, so is $\widetilde{\theta}$.
Known uniqueness results about inner multipliers show that there exists a partial isometry $V: \widehat{\mathcal{D}} \oplus \mathcal{D}_T \to \mathcal{E}$ such that $\widetilde{\theta}(z) = \theta(z) V$
and $\theta(z) = \widetilde{\theta}(z) V^*$ for all $z \in \mathbb{B}_d$;
see \cite[Theorem~4.2]{MT} or \cite[Proposition~2.3]{CHS20}.
The second equality shows that $\theta$ extends to a holomorphic function in a neighborhood of $\lambda$,
and the first equality then shows that $\theta$ is surjective at $\lambda$.
\end{proof}

\begin{rem}
  \label{rem:right_spectrum}
  A general result from multivariable spectral theory (Corollary 3.5 in~\cite{WE}) says that, for a commuting tuple $T\in L(H)^d$ with $\sigma(T) \subset \overline{\B}_d$, we have $\sigma_r(T)\cap \partial\B_d = \sigma(T)\cap \partial\B_d$. Moreover, by Theorem \ref{thm:main-2} and Corollary \ref{cor:right-spectrum-inner-points}, we have
  \begin{equation*}
    \sigma(M_z, H^2_d(\mathcal{D})/M) \cap \mathbb{B}_d = \{\lambda \in \mathbb{B}_d; \theta \text{ is not surjective at } \lambda \} = \sigma_r(M_z, H^2_d(\mathcal{D})/M) \cap \mathbb{B}_d.
  \end{equation*}
  Therefore, in the setting of Theorem \ref{thm:main-2}, we have
  \begin{equation*}
    \sigma(M_z, H^2_d(\mathcal{D})/M)  = \sigma_r(M_z, H^2_d(\mathcal{D})/M).
  \end{equation*}
\end{rem}

\section{Applications to row contractions}
\label{sec:applications}
Since every pure commuting row contraction $T \in L(\mathcal H)^d$ is unitarily equivalent to a quotient tuple of the
form $M_z/M \in L(H^2(\mathcal D_{T^*})/M)^d$, Theorem \ref{thm:main-2} yields a description of the Taylor spectrum of
$T$ in terms of its characteristic function.

\begin{cor}\label{cor:application-1}
Let $T \in L(\mathcal H)^d$ be a pure commuting row contraction such that $\dim \mathcal D_{T^*} < \infty$. Then
\[
\sigma(T)=\sigma_r(T)=\lbrace \lambda\in \overline{\mathbb B}_d; \; \theta_T \mbox{ is not surjective at }\lambda\rbrace.
\]
\end{cor}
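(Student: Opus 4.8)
\section*{Proof proposal}

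The plan is to deduce the statement from Theorem~\ref{thm:main-2} by realizing $T$ as a quotient tuple built from its own characteristic function, exactly as announced in the paragraph preceding the corollary. First I would make that unitary equivalence explicit. The canonical dilation $j:\Hcal \to H^2_d(\Dcal_{T^*})$ is an isometry that intertwines $T^*$ with $M_z^*$ componentwise, so $j\Hcal$ is invariant for $M_z^*$; hence $M := H^2_d(\Dcal_{T^*}) \ominus j\Hcal$ belongs to $\Lat(M_z, H^2_d(\Dcal_{T^*}))$. The orthogonal decomposition $H^2_d(\Dcal_{T^*}) = \theta_T H^2_d(\Dcal_T) \oplus j\Hcal$ recalled above identifies $M = \theta_T H^2_d(\Dcal_T)$, and since $M_{\theta_T}M_{\theta_T}^* = 1 - jj^*$ is an orthogonal projection, $\theta_T$ is an inner multiplier from $H^2_d(\Dcal_T)$ to $H^2_d(\Dcal_{T^*})$. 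Viewing $j$ as a unitary onto $j\Hcal = M^\perp$, it conjugates $T^*$ onto $M_z^*|_{M^\perp} = (P_{M^\perp}M_z|_{M^\perp})^*$, and therefore $T$ is unitarily equivalent to the quotient tuple $M_z/M$ on $H^2_d(\Dcal_{T^*})/M$ under the canonical identification of this quotient with $M^\perp$.

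Once this is in place the heavy lifting is done. Because $\dim \Dcal_{T^*} < \infty$ by hypothesis, Theorem~\ref{thm:main-2} applies verbatim with $\Dcal = \Dcal_{T^*}$, $\Ecal = \Dcal_T$ and $\theta = \theta_T$, yielding
\[
\sigma(M_z, H^2_d(\Dcal_{T^*})/M) = \{\lambda \in \overline{\B}_d;\ \theta_T \text{ is not surjective at }\lambda\}.
\]
Since the Taylor spectrum is a unitary invariant, the left-hand side equals $\sigma(T)$, which gives the second claimed equality. For the equality $\sigma(T) = \sigma_r(T)$ I would quote Remark~\ref{rem:right_spectrum}, which asserts $\sigma(M_z/M) = \sigma_r(M_z/M)$ in precisely this situation; as the right spectrum is likewise unitarily invariant, $\sigma_r(T) = \sigma_r(M_z/M) = \sigma(M_z/M) = \sigma(T)$.

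I do not expect a genuine obstacle: the required facts are already assembled in the body of the paper, so the argument is essentially a matter of correctly invoking the structure theory of \cite{BES} (that $j$ is an isometry, intertwines the adjoints componentwise, and has orthocomplement $\theta_T H^2_d(\Dcal_T)$), Theorem~\ref{thm:main-2}, and Remark~\ref{rem:right_spectrum}. The one point deserving a little care is the routine bookkeeping identifying the quotient tuple $M_z/M$ with the compression $P_{M^\perp}M_z|_{M^\perp}$ and verifying that $j$ really conjugates $T$ onto it rather than onto a close relative of it; this is standard for co-invariant subspaces of $M_z^*$ but should be stated cleanly.
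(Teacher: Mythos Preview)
Your proposal is correct and follows essentially the same approach as the paper: you realize $T$ as the quotient tuple $M_z/M$ with $M = \theta_T H^2_d(\Dcal_T)$ via the canonical dilation $j$, then invoke Theorem~\ref{thm:main-2} and Remark~\ref{rem:right_spectrum}. The paper's proof is slightly terser about the unitary equivalence, but the ingredients and structure are identical.
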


\begin{proof}
  Recall from the discussion following the proof of Theorem \ref{thm:charcteristic-function-local-surjectivity} that since $T$ is a pure row contraction, the map
\[
j: \mathcal H \rightarrow H^2_d(\mathcal D_{T^*}), j(x) = \sum\limits_{\alpha n\in \mathbb N^d}
\frac{| \alpha|!}{\alpha!} (D_{T^*}T^{*\alpha}x) z^\alpha
\]
is an isometry intertwining $T^* \in L(\mathcal H)^d$ and $M^*_z \in L(H^2_d(\mathcal D_{T^*}))^d$ componentwise such that
\[
M_{\theta_T}M^*_{\theta_T}+jj^*=1_{H^2_d(\mathcal D_{T^*})}.
\]
Since $M_{\theta_T}$ is a partial isometry, this leads to the orthogonal direct sum decomposition
\[
H^2_d(\mathcal D_{T^*}) = \theta_TH^2_d(\mathcal D_T) \oplus j\mathcal H.
\]
Define $M = \theta_T H^2_d(\mathcal D_T) \in {\rm Lat}(M_z,H^2_d(\mathcal D_{T^*}))$ and $H=H^2_d(\mathcal D_{T*})\ominus M$. Then
via the unitary operator $j: \mathcal H \rightarrow H$ the given tuple $T \in L(\mathcal H)^d$ and the compression
$P_H M_{z|H}\cong M_z/M \in L(H^2(\mathcal D_{T^*})/M)^d$ are unitarily equivalent. Thus the assertion follows from
Theorem \ref{thm:main-2} and Remark \ref{rem:right_spectrum}.
\end{proof}

In the single variable case $d = 1$ there is a natural extension of the result stated in Corollary 8 to the case of
completely non-unitary contractions $T \in L(\mathcal H)$ with no restriction on the defect space $\mathcal D_{T^*}$
(Theorem VI.4.1 in \cite{SZF}). At this moment it remains open whether Corollary~\ref{cor:application-1} remains true without the condition that
the defect space $\mathcal D_{T^*}$ is finite dimensional.

As a consequence of Corollary~\ref{cor:application-1} we obtain a dichotomy for pure commuting row contractions
$T \in L(\mathcal H)^d$ with $\dim \mathcal D_{T^*} < \infty$ whose characteristic function extends to an open
neighbourhood of the closed ball $\overline{\B}_d$.

\begin{cor}
Let $T\in L(\mathcal H)^d$ be a pure commuting row contraction such that $\dim \mathcal D_{T^*}< \infty$. Suppose that its characteristic
function extends to a holomorphic map $\widehat{\theta}_T: U \rightarrow L(\mathcal D_T,\mathcal D_{T^*})$ on an open set
$U \supset \overline{\B}_d$. Then either $\sigma(T) = \overline{\B}_d$ or $\dim \mathcal H < \infty$ and
$\sigma(T) \subset \B_d$ is finite.
\end{cor}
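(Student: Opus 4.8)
The plan is to read off the dichotomy from Corollary~\ref{cor:application-1}, which identifies $\sigma(T)$ with the set of points of $\overline{\B}_d$ at which $\theta_T$ fails to be surjective. Since $\theta_T$ has been assumed to extend to a holomorphic map $\widehat{\theta}_T$ on $U\supset\overline{\B}_d$, and any two holomorphic extensions of $\theta_T$ to connected neighbourhoods of $\B_d\cup\{\lambda\}$ agree by the identity theorem, surjectivity of $\theta_T$ at $\lambda\in\overline{\B}_d$ is the same as surjectivity of the single operator $\widehat{\theta}_T(\lambda)\in L(\mathcal D_T,\mathcal D_{T^*})$. Put $N=\dim\mathcal D_{T^*}$ and fix an orthonormal basis $d_1,\dots,d_N$ of $\mathcal D_{T^*}$. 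For any vectors $e_1,\dots,e_N\in\mathcal D_T$ the function $g_{e_1,\dots,e_N}(z)=\det\bigl(\langle\widehat{\theta}_T(z)e_j,d_i\rangle\bigr)_{i,j=1}^N$ is holomorphic on $U$, and $\widehat{\theta}_T(z)$ is surjective precisely when one of these determinants is non-zero at $z$. Hence the non-surjectivity set $\widetilde S=\{z\in U:\widehat{\theta}_T(z)\mathcal D_T\neq\mathcal D_{T^*}\}$ is a (possibly empty) analytic subvariety of $U$, and $\sigma(T)=\widetilde S\cap\overline{\B}_d$ by Corollary~\ref{cor:application-1}. Replacing $U$ by its connected component containing $\overline{\B}_d$, we obtain the dichotomy: \emph{either} all the $g_{e_1,\dots,e_N}$ vanish identically on $U$, in which case $\widehat{\theta}_T$ is nowhere surjective and $\sigma(T)=\overline{\B}_d$; \emph{or} some $g_{e_1,\dots,e_N}$ is not identically zero, and then $\widetilde S$ is a proper analytic subvariety of $U$.

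Assume the second alternative; I claim that $\sigma(T)$ is then finite and contained in $\B_d$. Being a proper analytic subvariety, $\widetilde S$ has vanishing surface measure, so the set of $\zeta\in\partial\B_d$ at which $\widehat{\theta}_T(\zeta)$ is \emph{not} surjective is null; since, by the theorem of Greene, Richter and Sundberg cited in the introduction, $\theta_T(\zeta)$ is a partial isometry for almost every $\zeta\in\partial\B_d$, for almost every $\zeta$ the operator $\widehat{\theta}_T(\zeta)$ is a surjective partial isometry, i.e. a coisometry, so that $\widehat{\theta}_T(\zeta)\widehat{\theta}_T(\zeta)^*=1_{\mathcal D_{T^*}}$. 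The explicit formula for $\theta_T$ shows that $\theta_T(z)^*$ depends anti-holomorphically on $z$, so $\psi(w):=\widehat{\theta}_T(\bar w)^*$ is holomorphic on the conjugate domain $\overline U=\{\bar z:z\in U\}$, which likewise contains $\overline{\B}_d$, and $(z,w)\mapsto\widehat{\theta}_T(z)\psi(w)-1_N$ is an $M_N(\C)$-valued holomorphic function on $U\times\overline U$ vanishing at almost every point of $\Gamma=\{(\zeta,\bar\zeta):\zeta\in\partial\B_d\}$. The set $\Gamma$ lies in the affine complex hyperplane $H=\{(z,w):\sum_i z_iw_i=1\}$, is connected, and is a real-analytic submanifold that is totally real of maximal dimension in $H$; hence it is a local uniqueness set, and (after noting $\Gamma$ is connected, so that $\widehat{\theta}_T(z)\psi(w)-1_N$ already vanishes on all of $\Gamma$ by real-analyticity) we conclude $\widehat{\theta}_T(z)\psi(w)=1_N$ on a neighbourhood of $\Gamma$ in $H$. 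Since $(z,\bar z/\|z\|^2)\in H$ lies near $\Gamma$ whenever $z$ is near $\partial\B_d$, projecting onto the first coordinate shows $\widehat{\theta}_T(z)$ is surjective for all $z$ in a neighbourhood of $\partial\B_d$ in $\C^d$. Thus $\widetilde S$ is bounded away from $\partial\B_d$, so $\sigma(T)=\widetilde S\cap\overline{\B}_d=\widetilde S\cap\B_d$ is a closed analytic subvariety of $\B_d$ contained in a compact subset of $\B_d$; being compact it is finite, and it lies in $\B_d$.

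It remains to prove $\dim\mathcal H<\infty$ in the second alternative. By Corollary~\ref{cor:right-spectrum-inner-points}, the finite-dimensionality of $\mathcal D_{T^*}$ gives $\sigma_{re}(M_z,\mathcal H)\subset\partial\B_d$, which together with $\sigma(M_z,\mathcal H)\subset\B_d$ forces $\sigma_{re}(M_z,\mathcal H)=\emptyset$. I would finish sheaf-theoretically: the image of the sheaf map $\mathcal O^{\mathcal D_T}\to\mathcal O^{\mathcal D_{T^*}}$, $f\mapsto\widehat{\theta}_T f$, on a neighbourhood of $\overline{\B}_d$ is a locally finitely generated—hence coherent—subsheaf of the coherent sheaf $\mathcal O^{\mathcal D_{T^*}}$, so its cokernel $\mathcal F$ is coherent; as $\operatorname{supp}\mathcal F=\widetilde S$ equals the finite set $\sigma(T)\subset\B_d$ near $\overline{\B}_d$, there $\mathcal F$ is a finite direct sum of skyscraper sheaves with finite-dimensional stalks, so $\Gamma(\B_d,\mathcal F)$ is finite-dimensional. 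Since $M=\theta_TH^2_d(\mathcal D_T)$ equals its saturation (a standard property of submodules generated by inner multipliers), the natural evaluation map $\mathcal H=H^2_d(\mathcal D_{T^*})/M\to\Gamma(\B_d,\mathcal F)$ is injective, whence $\dim\mathcal H<\infty$. Equivalently, one may exhibit inside the closed ideal $I(M)$ a polynomial ideal of finite codimension—using the determinants of Lemma~\ref{lem:annihilator-determinant}, which cut out $S$ locally with finite multiplicity—and deduce that $M$ has finite codimension in $H^2_d(\mathcal D_{T^*})$.

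The step I expect to be the main obstacle is the passage from ``$\theta_T$ surjective at some point'' to ``$\sigma(T)$ finite and inside $\B_d$'': for $d\ge 2$ a proper analytic subvariety of $U$ is positive-dimensional and may meet $\partial\B_d$ in isolated points, so merely placing $\sigma(T)$ inside such a subvariety is insufficient, and one genuinely has to use the almost-everywhere coisometry property of $\theta_T$ on $\partial\B_d$ together with a several-variables uniqueness argument along the totally real manifold $\Gamma$. Nailing down $\dim\mathcal H<\infty$ (the saturation property of $M$, equivalently the coherence/support argument for $\mathcal F$) is the second point that requires some care.
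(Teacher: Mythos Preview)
Your argument is essentially correct and reaches the same conclusion, but it is considerably more hands-on than the paper's proof, and the two diverge at precisely the points you flag as delicate.

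\textbf{Boundary surjectivity.} Once a single interior point of surjectivity for $\theta_T$ has been found, the paper simply invokes Theorem~1.4 of \cite{GRS}, which directly yields $\widehat{\theta}_T(\zeta)\mathcal{D}_T=\mathcal{D}_{T^*}$ for every $\zeta\in\partial\B_d$; Corollary~\ref{cor:application-1} then gives $\sigma(T)\subset\B_d$. Your route---combining the a.e.\ partial-isometry boundary values with a uniqueness argument along the maximal totally real submanifold $\Gamma=\{(\zeta,\bar\zeta)\}$ of $\{\sum z_iw_i=1\}$---is in effect a proof of (the relevant special case of) that GRS theorem. It is correct, and it is nice to see the mechanism, but it is substantially longer than quoting the result.

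\textbf{Finite dimensionality of $\mathcal{H}$.} Here the paper reverses your order of reasoning: having obtained $\sigma(T)\subset\B_d$ and $\sigma_{re}(T)\subset\partial\B_d$ (Corollary~\ref{cor:right-spectrum-inner-points}), it concludes $\sigma_{re}(T)=\emptyset$ and then invokes a general Banach-space fact (Theorems~9 and~17 in \cite[Section~19]{VM}) to get $\dim\mathcal{H}<\infty$, from which the finiteness of $\sigma(T)$ is immediate. You already establish $\sigma_{re}(T)=\emptyset$, so this shortcut is available to you as well; your sheaf-theoretic detour via coherence and the saturation property of $M$ is unnecessary (and, as you note, the saturation step would need a careful justification). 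Likewise, your separate argument that a compact analytic subset of $\B_d$ is finite is correct, but becomes redundant once $\dim\mathcal{H}<\infty$ is known.

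In short: your proof works, but the paper's is shorter because it outsources both nontrivial steps to existing results (\cite[Theorem~1.4]{GRS} and \cite[Section~19]{VM}), whereas you reprove the first and replace the second by a more elaborate construction.
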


\begin{proof}
As seen in the proof of Corollary \ref{cor:application-1} there is a closed invariant subspace
$M = \theta_TH^2_d(\mathcal D_T) \in {\rm Lat}(M_z,H^2_d(\mathcal D_{T^*}))$ such that $T$ is unitarily equivalent
to the quotient tuple $M_z/M\in L(H^2(\mathcal D_{T^*})/M)^d$. Suppose that $\sigma(T) \neq \overline{\mathbb B}_d$.
Since $\sigma(T)$ is closed, Theorem \ref{thm:charcteristic-function-local-surjectivity} shows there is
a point $\lambda \in \mathbb B_d$ with $\theta_T(\lambda) \mathcal D_T = \mathcal D_{T^*}$. Then Theorem 1.4 in
\cite{GRS} implies that $\widehat{\theta}_T(\lambda) \mathcal D_T = \mathcal D_{T^*}$ for all $\lambda \in \partial \mathbb B_d$.
Corollary~\ref{cor:application-1}, now shows that
$\sigma(T) \subset \mathbb{B}_d$. On the other hand, 
since $\dim \mathcal{D}_{T^*} < \infty$, Corollary \ref{cor:right-spectrum-inner-points}
implies that $\sigma_{re}(T) \subset \partial \mathbb{B}_d$,
hence $\sigma_{re}(T) = \emptyset$.
Therefore, $\dim \mathcal{H} < \infty$ (see e.g.\ Theorems 9 and 17 in \cite[Section 19]{VM}), and hence $\sigma(T) \subset \mathbb{B}_d$ is a finite set.
\end{proof}

\bibliographystyle{plainurl}
\bibliography{bibliography}

\end{document}